\documentclass{amsart}

\usepackage{amsmath,amssymb,amsthm}
\usepackage{hyperref}
\usepackage{xcolor}

\hyphenation{mani-fold mani-folds sub-mani-fold
 sub-mani-folds topo-logy self-in-ter-sec-tion pluri-sub-har-monic
 pa-ram-e-tri-za-tion}

\newtheorem{thm}{Theorem}[section]
\newtheorem{prop}{Proposition}[subsection]
\newtheorem{lem}[prop]{Lemma}

\theoremstyle{definition}

\newtheorem{rem}[prop]{Remark}
\newtheorem{exwith}[prop]{Example}

\newtheorem*{ack}{Acknowledgement}

%%%%%%%%%%%%%%%%%%%% Commands %%%%%%%%%%%%%%%%%%%%%%%%

\def\co{\colon\thinspace}

\newcommand{\rmd}{\mathrm d}

\newcommand{\N}{\mathbb N}

\newcommand{\R}{\mathbb R}

\newcommand{\Z}{\mathbb Z}

\newcommand{\lra}{\longrightarrow}
\newcommand{\ra}{\rightarrow}

\DeclareMathOperator{\dR}{\mathrm{dR}}

\DeclareMathOperator{\inj}{\mathrm{inj}}
\DeclareMathOperator{\Int}{\mathrm{Int}}

%%%%%%%%%%%%%%%%%%%%%%%%%%%%%%%%%%%%%%
%%%%%%%%%%%%%%%%%%%%%%%%%%%%%%%%
%%%%%%%%%%%%%%%%%%%%%%%%%%%%%%%%%%%%%%
%%%%%%%%%%%%%%%%%%%%%%%%%%%%%%%%

\begin{document}

\author{Kevin Wiegand}
\author{Kai Zehmisch}
\address{Mathematisches Institut, Westf\"alische Wilhelms-Universit\"at M\"unster, Einsteinstr. 62,
D-48149 M\"unster, Germany}
\email{kevin.wiegand@uni-muenster.de, kai.zehmisch@uni-muenster.de}

\title[Virtually contact structures]
{Two constructions of virtually contact structures}

\date{23.05.2017}

\begin{abstract}
  Motivated by recent developments
  in proving the Weinstein conjecture
  we introduce the notion of covering
  contact connected sum
  for virtually contact manifolds
  and construct virtually contact structures
  on boundaries of subcritical handle bodies.
\end{abstract}

\subjclass[2010]{53D10, 70H99, 37J45}
\thanks{KW is supported by the SFB 878 - Groups, Geometry and Actions.
KZ is partially supported by the SFB/TRR 191 - Symplectic Structures in Geometry,
Algebra and Dynamics and DFG grant ZE 992/1-1.
}

\maketitle

%%%%%%%%%%%%%%%%%%%%%%%%%%%%%%%%%%%
%%%%%%%%%%%%%%%%%%%%%%%%%%%%%%%%%%%

\section{Introduction\label{sec:intro}}

Virtually contact structures
naturally appear in classical mechanics
in the study of magnetic flows
on compact Riemannian manifolds $(Q,h)$
of negative sectional curvature.
The appearance of the magnetic $2$-form
$\sigma$ on $Q$ is reflected in the use of
the twisted symplectic form on $T^*Q$
obtained by adding the pull back of $\sigma$
along the cotangent bundle projection to
$\rmd\mathbf{p}\wedge\rmd\mathbf{q}$.
As it turns out, energy surfaces $M\subset T^*Q$
of twisted cotangent bundles
need not to be of contact type in general.

It was pointed out by
Cieliebak--Frauenfelder--Parternain \cite{cfp10}
that in many interesting cases
a certain covering $\pi\co M'\ra M$
of the energy surface $M\subset T^*Q$
admits a contact form $\alpha$
whose Reeb flow projects to the
Hamiltonian flow on the energy surface
$M\subset T^*Q$ up to parametrization.
Moreover, the contact form $\alpha$ admits
uniform upper and lower bounds
with respect to a lifted metric.
In this situation, the manifold $M$ together with
the odd-dimensional symplectic form $\omega$
obtained by restriction of the twisted symplectic form
to $TM$ is called a virtually contact manifold.
In particular,
questions about periodic orbits
on virtually contact manifolds $(M,\omega)$
can be answered on the covering space $M'$
with help of the contact form $\alpha$.

If the covering space $M'$
of a virtually contact manifold $(M,\omega)$
is compact, and hence the covering $\pi$ is finite,
the energy surface $M$ will be of contact type.
The existence question
about periodic orbits in this case is subject
to the Weinstein conjecture,
see \cite{wein79},
and 
the virtually contact manifold $(M,\omega)$
is called to be trivial.
If the covering $\pi$ is infinite
with a non-amenable covering group,
one is intended to study
periodic orbits on a non-compact contact manifold
$(M',\alpha)$.
This is because the covered energy surface $M$
is not necessarily of contact type.

In general, open contact manifolds do admit
aperiodic Reeb flows as the standard contact form
$\rmd z+\mathbf{y}\rmd\mathbf{x}$
on Euclidean spaces shows.
In order to achieve existence of periodic Reeb orbits
additional conditions are required,
cf.\ \cite{bae14, bpv09, bprv13, cfp10, dpp15, sz16}.
It was asked by G.\ P.\ Paternain
whether virtually contact manifolds have to admit
periodic orbits.
The question was answered positively in many instances
by  Cieliebak--Frauenfelder--Parternain \cite{cfp10}
and, more recently,
by Bae--Wiegand--Zehmisch \cite{bwz}.
The content of the following theorem is
to give a large class of examples
to which the existence theory
developed in \cite{bwz} applies.

\begin{thm}
\label{thm:mainthm}
  For all $n\geq2$
  there exist non-trivial closed virtually contact manifolds $M$
  of dimension $2n-1$ which topologically
  are a connected sum such that the corresponding
  belt sphere represents a non-trivial homotopy class
  in $\pi_{2n-2}M$.
  The involved covering space $M'$
  is obtained by covering contact connected sum.
\end{thm}

The virtually contact structures studied by
Cieliebak--Frauenfelder--Parternain \cite{cfp10}
are diffeomorphic to unit cotangent bundles
of negatively curved manifolds.
The examples we are going to construct
in Section \ref{subsec:pfofthm} are obtained
by covering connected sum,
which is an extension
of the contact connected sum operation
to the class of virtually contact manifolds.
In Section \ref{subsec:st*qisprim} we will show
that unit cotangent bundles of aspherical manifolds
are prime.
This implies that the covering connected sum
produces virtually contact structure
that differ from those studied in \cite{cfp10}.

Motivated by Hofer's \cite{hof93}
verification of the Weinstein conjecture
for closed overtwisted contact $3$-manifolds
Bae \cite{bae14} constructed
virtually contact manifolds in dimension $3$
using a covering version of the Lutz twist.
The topology of
the base manifold of the covering
thereby stays unchanged.
The total space of the resulting covering
is an overtwisted contact manifold
and the virtually contact structure will be
non-trivial.
In Proposition \ref{prop:pertofcontwnegcurv}
we present a tool to produce more
examples of that nature.
Let us remind that
non-trivially here and in Theorem \ref{thm:mainthm}
means that the symplectic form
on the odd-dimensional manifold
is not the differential of a contact form.

The verification of the Weinstein conjecture by Hofer \cite{hof93}
for closed reducible $3$-manifolds suggests
the question about the existence of
non-trivial virtually contact $3$-manifolds
with non-vanishing $\pi_2$.
This question is answered by Theorem \ref{thm:mainthm}.
In fact, the results in
\cite{grz14,grz16,gz16a,gz16b,gnw16}
motivated the definition of the
covering contact connected sum.
Extending the work of Geiges--Zehmisch \cite{gz16a}
the existence of periodic orbits
for virtually contact structures
addressed by Theorem \ref{thm:mainthm}
that in addition admit a $C^3$-bounded contact form
on the total space of the covering
is shown in \cite{bwz}. 

In Section \ref{sec:morsepot}
we will give a second construction
of virtually contact structures
that will be obtained via energy surfaces
of classical Hamiltonian functions
in twisted cotangent bundles.
The corresponding energy will be below
the Ma\~n\'e critical value of the
involved magnetic system
so that the energy surfaces intersect
the zero section of the cotangent bundle.
The topology of the energy surface
is determined by the potential function
on the configuration space
according to Morse theoretical considerations.

\begin{thm}
\label{thm:2ndthm}
  For any $n\geq2$ and given $b\in\N$
  there exists a closed virtually contact manifold $M$
  of dimension $2n-1$
  such that $\pi_nM$ and the image in $H_nM$
  under the Hurewicz homomorphism, resp.,
  contain a subgroup
  isomorphic to $\Z^b$.
  The virtually contact manifold $M$
  appears as the energy surface
  of a classical Hamiltonian function
  in a twisted cotangent bundle $T^*Q$.
  The rank $b$ of the subgroup $\Z^b$
  is the first Betti number
  of the configuration space $Q$.  
  If $n\geq3$
  the virtually contact structure on $M$
  is non-trivial.
\end{thm}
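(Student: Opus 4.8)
The plan is to construct the virtually contact manifold $M$ as an energy surface $\{H=c\}$ in a twisted cotangent bundle $(T^*Q,\omega_\sigma)$, where $\omega_\sigma=\rmd\mathbf p\wedge\rmd\mathbf q+\pi^*\sigma$ and $H(\bfq,\bfp)=\tfrac12|\bfp|^2+V(\bfq)$ is a classical Hamiltonian built from a kinetic term and a potential $V$ on the configuration space $Q$. First I would choose $Q$ to be a closed $n$-manifold with first Betti number $b$; the simplest candidate is a product such as $T^b\times N$ with $N$ chosen to make $\dim Q=n$ and $H_1(Q)\cong\Z^b$, and equipped with a magnetic form $\sigma$ that is a nonzero closed (but nonexact) $2$-form pulled back from the torus factor so that $M$ is genuinely virtually contact rather than of contact type. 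The energy $c$ is selected below the Ma\~n\'e critical value of the lift to the universal (or a suitable abelian) cover, which by the discussion preceding the theorem guarantees both that the energy surface meets the zero section and that the chosen covering $\pi\co M'\ra M$ carries a contact form $\alpha$ with the required uniform bounds, i.e.\ that $(M,\omega)$ is virtually contact with $\omega=\omega_\sigma|_{TM}$.

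Next I would analyze the topology of $M$ by Morse theory on the configuration space, exactly along the lines flagged in the paragraph introducing Section~\ref{sec:morsepot}. Below the Ma\~n\'e value the fiberwise sphere condition $\tfrac12|\bfp|^2=c-V(\bfq)$ has solutions precisely over the sublevel set $\{V<c\}$, and the projection $M\ra\{V\le c\}$ is a sphere bundle over the interior that degenerates over the boundary $\{V=c\}$. Choosing $V$ to be a Morse function whose sublevel region $\{V\le c\}$ is homotopy equivalent to $Q$ (or to a chosen subcomplex) lets me read off that $M$ is homotopy equivalent to the unit cotangent sphere bundle $S^*Q$ over that region. The key computation is then that $\pi_nM$ and the Hurewicz image in $H_nM$ each contain a copy of $\Z^b=H_1(Q)\otimes\Z$; concretely, for $n\ge2$ the fiber $S^{n-1}$ together with the $b$ independent loops generating $H_1(Q)\cong\Z^b$ produce $b$ independent spherical classes in degree $n$ via the bundle structure, and I would verify their independence through the Gysin sequence of the sphere bundle $S^{n-1}\hookrightarrow M\ra Q$, identifying the $\Z^b$ with the image of $H_1(Q)$ under the map sending a loop to the corresponding sphere class in $H_nM$.

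To get the statement that the rank equals $b_1(Q)$ I would make the Gysin computation precise: the relevant portion of the sequence relates $H_n M$ to $H_{n-1}Q$ and $H_1Q$, and choosing $\sigma$ and $V$ so the Euler class contribution vanishes on the free part forces the $\Z^b$ summand to inject into $H_nM$; lifting these classes to $\pi_nM$ uses that the fibers are simply connected spheres $S^{n-1}$ with $n\ge2$, so the spherical classes survive to homotopy. Finally, for the non-triviality assertion when $n\ge3$, I would argue that $\omega$ is not the differential of any contact form on $M$: if it were, $M$ would be of contact type and the compactly supported symplectic cohomology / the cohomological obstruction coming from $[\sigma]\neq0$ would force $[\omega|_{TM}]$ to be exact in a way incompatible with the nonexact magnetic class, and the nonamenable (or at least infinite) covering group of $\pi\co M'\ra M$ rules out the finite-cover case. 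The main obstacle I anticipate is precisely this last step: cleanly certifying non-triviality, i.e.\ ensuring that the restricted twisted form $\omega$ admits no primitive contact form, which requires controlling the cohomology class $[\omega]\in H^2(M)$ via the Gysin sequence and confirming it is nonzero exactly when $n\ge3$ (the dimension hypothesis enters here because for $n=2$ the relevant cohomology can collapse); this is where I would spend the most care, tying the nonexactness of $\sigma$ on $Q$ to the nonexactness of $\omega$ on the sphere bundle $M$.
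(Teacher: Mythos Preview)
Your proposal has two substantial gaps. First, the choice $Q=T^b\times N$ with $\sigma$ pulled back from the torus factor is precisely the case in which the virtually contact construction fails: as noted in the proof of Proposition~\ref{prop:boundgivescontact} (following Gromov), a nonexact closed $2$-form on a flat torus lifts to a form on Euclidean space whose every primitive is unbounded with respect to the lifted metric, so there is no bounded $\theta$ with $\mu^*\sigma=\rmd\theta$ and hence no contact form $\alpha$ satisfying \eqref{eq:gb1}. The paper instead takes $Q$ to be a negatively curved manifold, or a product of such (hyperbolic surfaces, the hyperbolic mapping tori of Example~\ref{ex:anosovthurston}, and their products), for which Gromov's result guarantees bounded primitives.

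Second, your topological analysis rests on treating $M$ as an $S^{n-1}$-bundle over the sublevel set and running a Gysin argument. But because the energy is taken below $\max_Q V$, the energy surface meets the zero section and the fibres degenerate over $\{V=c\}$; $M$ is not a sphere bundle (e.g.\ when $Q$ is a genus-$g$ surface the paper computes $M\cong S^3\#(2g)(S^1\times S^2)$). The paper's route is different: one arranges the Morse function $V$ so that the sublevel set $W=\{H\le0\}$ is a handlebody built only from handles of index at most $n-1$; then the long exact sequence of the pair $(W,M)$ together with Poincar\'e--Lefschetz duality injects $H_{n-1}W\cong H_{n-1}(Q\setminus\{*\})=H_{n-1}Q$ into $H_nM$, the generators being the belt spheres $\{*\}\times S^n$ of the cocore $(n+1)$-discs. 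These belt spheres are what furnish the $\Z^{b_1Q}$ inside $\pi_nM$; the ``loop times fibre'' heuristic you sketch does not produce spheres. A further point you elide is that obtaining a contact form on the cover near the zero section is not automatic: one must arrange $\sigma$ (and its bounded primitive $\theta$) to vanish over a neighbourhood of the maximum of $V$, and then perturb the Liouville form by an exact term $\rmd F$ there, as carried out in Section~\ref{subsec:virtcontype}.
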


Based on the work of
Ghiggini--Niederkr{\"u}ger--Wendl \cite{gnw16}
existence of periodic solutions in the context of
Theorem \ref{thm:2ndthm} can be shown
provided that the magnetic form
has a $C^3$-bounded primitive
on the universal cover of $Q$,
see \cite[Theorem 1.1 and 1.2]{bwz}.
Furthermore, by the classification
obtained by Barth--Geiges--Zehmisch
in \cite[Theorem 1.2.(a)]{bgz}
the contact structure on $M$ obtained by
homotoping the magnetic term
of the twisted cotangent bundle $T^*Q$ to zero
is different from the standard contact structure
on the unit cotangent bundle $ST^*P$
of any Riemannian manifold $P$.

%%%%%%%%%%%%%%%%%%%%%%%%%%%%%%%%%%%
%%%%%%%%%%%%%%%%%%%%%%%%%%%%%%%%%%%

\section{A construction via surgery\label{sec:construction}}

%%%%%%%%%%%%%%%%%%%%%%%%%%%%%%%%%%%

\subsection{Definitions\label{subsec:definitions}}

The following terminology was introduced in \cite{bae14, cfp10}.
Let $M$ be a $(2n-1)$-dimensional manifold
for $n\geq2$.
A closed $2$-form $\omega$ on $M$ is called
{\bf symplectic} if $\ker\omega$
is a $1$-dimensional distribution.
The pair $(M,\omega)$ is an
{\bf odd-dimensional symplectic manifold}.
It is called {\bf virtually contact} if the following
two conditions are satisfied:

{\bf Primitive:}
There exist a covering $\pi\co M'\ra M$
and a contact form $\alpha$ on $M'$
such that $\pi^*\omega=\rmd\alpha$,
so that $\alpha$ is a primitive of the lift of $\omega$
and $\alpha$ defines a contact structure $\xi=\ker\alpha$
on the covering space $M'$.

{\bf Bounded geometry:}
There exist a metric $g$ of bounded geometry on $M$ and
a constant $c>0$ subject to the following geometric bounds:
\[
\sup_{M'}|\alpha|_{(\pi^*g)^{\flat}}<\infty
\label{eq:gb1}\tag{gb$_1$}
\]
with respect to the dual of the pull back metric
$\pi^*g$;
and for all $v\in\ker\rmd\alpha$
\[
|\alpha(v)|>c|v|_{\pi^*g}\,.
\label{eq:gb2}\tag{gb$_2$}
\]
If the manifold $M$ is closed
any metric $g$ will be of {\bf bounded geometry},
i.e.\ the injectivity radius $\inj_g>0$ of $(M,g)$
is positive and the absolut value of the sectional curvature
$|\!\sec_g\!|$ is bounded.

The tuple
\[
\big(\pi\co M'\ra M,\alpha,\omega,g\big)
\]
is called {\bf virtually contact structure}
and $(M,\omega)$ a {\bf virtually contact manifold}.
A virtually contact manifold is {\bf non-trivial}
if $\omega$ is not the differential
of a contact form on $M$.
In particular, the covering $\pi$
of a non-trivial virtually contact structure is infinite
and $M$ has a non-amenable fundamental group.
A virtually contact structure
is called {\bf somewhere contact}
if there exist an open subset $U$ of $M$
and a contact form $\alpha_U$ on $U$
such that $\pi^*\alpha_U=\alpha$
on $\pi^{-1}(U)$.

%%%%%%%%%%%%%%%%%%%%%%%%%%%%%%%%%%%

\subsection{Covering connected sum\label{subsec:covconsum}}

For $i=1,2$ we consider two somewhere contact virtually contact structures
$\big(\pi_i\co M'_i\ra M_i,\alpha_i,\omega_i,g_i\big)$.
Denote by $U_i$, $i=1,2$, an open subset of $M_i$
on which a contact form $\alpha_{U_i}$ exists
according the the definition of being somewhere contact.
Given a bijection $b$ between the fibers
of the coverings $\pi_1$ and $\pi_2$
over the respective base points of  $M_1$ and $M_2$
we define a covering connected sum as follows:

Let $D_i^{2n-1}$, $i=1,2$, be a closed embedded disc
contained in $U_i$ such that a neighbourhood
of the disc is equipped with Darboux coordinates
for the contact form $\alpha_{U_i}$.
We perform contact index-$1$
surgery as described in \cite{gei08}
identifying $\partial D_i^{2n-1}$
with the boundary $\{i\}\times S^{2n-2}$
of the upper boundary of $[1,2]\times S^{2n-2}$
the $1$-handle $[1,2]\times D^{2n-1}$.
The resulting contact form on
the connected sum $U_1\#U_2$
is denoted by $\alpha_{U_1}\#\alpha_{U_2}$.
Notice, that $\alpha_{U_1}\#\alpha_{U_2}$
coincides with $\alpha_{U_i}$ on $U_i\setminus D_i^{2n-1}$.
Let $\omega$ be the odd-dimensional symplectic form
on $M_1\#M_2$ that coincides with
$\omega_i$ on $M_i\setminus U_i$
for $i=1,2$ and with $\rmd(\alpha_{U_1}\#\alpha_{U_2})$
on $U_1\#U_2$.
Similarly, a metric $g$ of bounded geometry
can be defined via extension of $g_1$ and $g_2$
over the handle part.

In order to define a connected sum
of the coverings $\pi_i$ we may assume that
the base point $x_i$ of $M_i$
lies on the boundary of $D_i^{2n-1}$.
Moreover,
we choose the subset $U_i$, $i=1,2$,
so small such that
$\pi_i^{-1}(U_i)$ decomposes into a
disjoint union of open sets $U_i^y$, $y\in\pi_i^{-1}(x_i)$,
and that the restriction of $\pi_i$ to $U_i^y$
is an embedding into $M_i$ for all $y\in\pi_i^{-1}(x_i)$.
Then, topologically,
we define a family of connected sums
$U_1^y\#U_2^{b(y)}$ according to the bijection
$b$ between the fibers over the base points.

The restrictions of the contact forms $\alpha_i|_{U_i^y}$
correspond to the local contact form $\alpha_{U_i}$
diffeomorphically via $\pi_i$, $i=1,2$.
A contact form on
\[
U_1^y\#U_2^{b(y)}
\]
can be defined equivariantly
via contact connected sum as follows:
Let $M'_1\#_bM'_2$ be the manifold
obtained by gluing
$M'_i\setminus\pi_i^{-1}(U_i)$, $i=1,2$,
with $U_1^y\#U_2^{b(y)}$, $y\in\pi_i^{-1}(x_1)$,
along their boundaries in the obvious way.
We obtain a covering
\[
\pi\co M'_1\#_bM'_2\lra M_1\#M_2
\]
that restricts to $\pi_i$ on $M'_i\setminus\pi_i^{-1}(U_i)$, $i=1,2$,
and defines the trivial covering over the handle parts
being the identity restricted to each of the sheets.
Then $M'_1\#_bM'_2$ carries a contact form $\alpha$
whose restriction to the union of the $U_1^y\#U_2^{b(y)}$, $y\in\pi_1^{-1}(x_1)$,
coincides with $\pi^*\big(\alpha_{U_1}\#\alpha_{U_2}\big)$
and that restricts to $\alpha_i$ on
$M'_i\setminus\pi_i^{-1}(U_i)$, $i=1,2$.
Because each of the involved handles is compact
the covering $\pi\co M'\ra M$ of $M=M_1\#M_2$
by $M'=M'_1\#_bM'_2$ defines a virtually contact structure
given by $\big(\pi\co M'\ra M,\alpha,\omega,g\big)$.

\begin{rem}
 \label{rem:conthandle}
 Observe, that the modle contact handle
 used for the contact connected sum
 carries obvious periodic characteristics of
 $\ker\big(\rmd(\alpha_{U_1}\#\alpha_{U_2})\big)$
 inside the {\bf belt sphere} $\{3/2\}\times S^{2n-2}$.
 The situation changes after a perturbation
 of $\alpha_{U_1}\#\alpha_{U_2}$
 obtained by a multiplication with a positive function
 that is constantly equal to $1$ in the complement
 of the handle.
 This operation changes the virtually contact structure
 on the connected sum $M=M_1\#M_2$
 but not the contact structure $\xi=\ker\alpha$ on the covering $M'$.
 Still, there exists a contact embedding
 of the modle contact handle into $(M',\xi)$.
\end{rem}

\begin{lem}
\label{lem:nontrivaftercovconsum}
 For $i=1,2$ let $\big(\pi_i\co M'_i\ra M_i,\alpha_i,\omega_i,g_i\big)$
 be a somewhere contact virtually contact structure.
 If $\omega_1$ is non-exact,
 then the odd-dimensional symplectic form $\omega$
 on $M_1\#M_2$ corresponding to
 the virtually contact structure
 \[
 \big(\pi\co M'\ra M,\alpha,\omega,g\big)
 \]
 obtained by covering contact connected sum
 is non-exact.
\end{lem}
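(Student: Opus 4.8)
The plan is to prove non-exactness by exhibiting a concrete region of $M=M_1\#M_2$ on which the glued form is literally $\omega_1$, and then showing that the nontrivial class $[\omega_1]\in H^2_{\dR}(M_1)$ survives restriction to that region. The point is that the covering contact connected sum alters $\omega_1$ only inside the small set $U_1$, and a $(2n-1)$-dimensional surgery disc is too high in codimension to kill a closed $2$-form. Concretely, I would show that the class $[\omega]\in H^2_{\dR}(M)$ restricts nontrivially to the embedded punctured copy of $M_1$ inside the connected sum, so that $\omega$ cannot be globally exact.

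The first step is to verify the pointwise identity $\omega=\omega_1$ on $M_1\setminus D_1^{2n-1}\subset M$. On $M_1\setminus U_1$ this is the definition of the glued form. On the annular region $U_1\setminus D_1^{2n-1}$ one uses that $\alpha_{U_1}\#\alpha_{U_2}$ coincides with $\alpha_{U_1}$ there, so $\omega=\rmd\alpha_{U_1}$; and since $(M_1,\omega_1)$ is somewhere contact with $\pi_1^*\alpha_{U_1}=\alpha_1$, pulling back along $\pi_1$ gives
\[
\pi_1^*\omega_1=\rmd\alpha_1=\pi_1^*(\rmd\alpha_{U_1}),
\]
whence $\omega_1=\rmd\alpha_{U_1}$ on $U_1$ because $\pi_1$ is a local diffeomorphism. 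Thus $\omega=\omega_1$ on all of $M_1\setminus D_1^{2n-1}$. I would also note here that the cutoff perturbation of Remark \ref{rem:conthandle} is supported in the handle and therefore leaves $\omega$ untouched on $M_1\setminus D_1^{2n-1}$, so the conclusion is insensitive to whether one uses the perturbed or unperturbed structure.

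The second step pushes the cohomological nontriviality through restriction. Writing $\iota\co M_1\setminus D_1^{2n-1}\hookrightarrow M$ and $j\co M_1\setminus D_1^{2n-1}\hookrightarrow M_1$ for the two inclusions, the identity above gives $\iota^*[\omega]=j^*[\omega_1]$. Now $M_1\setminus D_1^{2n-1}$ is homotopy equivalent to $M_1$ with a point removed, and the relevant relative group is $H^2(M_1,M_1\setminus\text{pt})\cong\widetilde H^1(S^{2n-2})=0$, the vanishing holding because $2n-2\geq2>1$. Hence $j^*\co H^2_{\dR}(M_1)\to H^2_{\dR}(M_1\setminus D_1^{2n-1})$ is injective in degree $2$, so $\iota^*[\omega]=j^*[\omega_1]\neq0$. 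An exact form would restrict to an exact form, a contradiction; therefore $\omega$ is non-exact.

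The step requiring the most care is the first one, since it is where the surgery construction genuinely enters: one must track that the handle gluing and any rescaling modify $\omega$ only inside $D_1^{2n-1}$. The algebraic-topological injectivity is routine but worth emphasizing for being uniform in $n$: the only obstruction to injectivity of $j^*$ sits in degree $2n-1$, never in degree $2$, so the boundary case $n=2$ needs no separate treatment even though the belt sphere $S^{2n-2}=S^2$ then carries nontrivial $H^2$ and a direct Mayer--Vietoris computation of $H^2_{\dR}(M)$ would bifurcate. An equivalent and perhaps more transparent route is the homological dual: pick by de Rham duality a smooth $2$-cycle $z$ in $M_1$ with $\int_z\omega_1\neq0$, put $z$ in general position to avoid $D_1^{2n-1}$ (possible since $2<2n-1$), and observe that $\int_z\omega=\int_z\omega_1\neq0$ in $M$.
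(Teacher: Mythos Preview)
Your proof is correct and follows essentially the same approach as the paper: both arguments reduce to the fact that $\omega$ restricts to $\omega_1$ on $M_1\setminus D_1^{2n-1}$, and that removing a top-dimensional disc does not alter $H^2_{\dR}$ because $H^1_{\dR}(S^{2n-2})=0$. The paper phrases this by contradiction and invokes Mayer--Vietoris (together with an explicit extension of a primitive over the disc), whereas you argue directly via the long exact sequence of the pair $(M_1,M_1\setminus\{\text{pt}\})$; these are equivalent formulations of the same topological input. One small imprecision: in your alternative homological route, the phrase ``general position to avoid $D_1^{2n-1}$'' is slightly off since the disc has codimension~$0$---what the inequality $2<2n-1$ actually buys is that a $2$-cycle generically misses a \emph{point}, after which one retracts $M_1\setminus\{\text{pt}\}$ onto $M_1\setminus D_1^{2n-1}$.
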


\begin{proof}
 We argue by contradiction and
 continue the use of notation from above.
 Suppose that the symplectic form $\omega$
 on the $(2n-1)$-dimensional connected sum
 $M=M_1\#M_2$ has a primitive.
 Then the restriction $\omega_1$ of $\omega$
 to $M_1\setminus D^{2n-1}_1$ does.
 An interpolation argument
 for primitives in terms of Mayer--Vietoris sequence
 in de Rham cohomology 
 using $H^1_{\dR}(S^{2n-2})=0$
 shows that the odd-dimensional symplectic form
 $\omega_1$ on $M_1$ has a primitive.
 
 A more elementary argument goes as follows:
 Denote the primitive of the restriction
 of $\omega_1$ to $M_1\setminus D^{2n-1}_1$
 by $\lambda$.
 Observe that $\lambda|_U$ is a closed $1$-form
 and, hence, exact in a neighbourhood $D'$
 of the disc $D^{2n-1}_1$.
 Cutting a primitive function of $\lambda|_{D'}$
 down to zero in radial direction
 we can assume that $\lambda$ vanishes near
 $\partial D^{2n-1}_1\subset U\subset M$.
 In other words,
 a perturbation of $\lambda$
 extends over $D^{2n-1}_1$ by zero
 resulting in a primitive of $\omega_1$.
 This is a contradiction.
\end{proof}

%%%%%%%%%%%%%%%%%%%%%%%%%%%%%%%%%%%

\subsection{Magnetic flows\label{subsec:magflow}}

Virtually contact structures appear naturally
on energy surfaces of classical Hamiltonians
on twisted cotangent bundles.
We briefly recall the construction following \cite{bz15, cfp10}.

Let $(Q,h)$ be a closed $n$-dimensional
Riemannian manifold and let $\sigma$ be a
closed $2$-form on $Q$, which is called the
{\bf magnetic form}.
The {\bf Liouville form} on the cotangent bundle
$\tau\co T^*Q\ra Q$ is the $1$-form $\lambda$
on the total space $T^*Q$ that is given by
$\lambda_u=u\circ T\tau$ for all covectors $u\in T^*Q$.
The {\bf twisted symplectic form} by definition is
$\omega_{\sigma}=\rmd\lambda+\tau^*\sigma$.
For a smooth function $V$ on $Q$,
the so-called {\bf potential}, and
the dual metric $h^{\flat}$ of $h$ we consider the
Hamiltonian of classical mechanics
\[
H(u)=\frac12|u|^2_{h^{\flat}}+V\big(\tau(u)\big)\,.
\]
For energies $k>\max_{Q}V$ we consider
the energy surfaces $\{H=k\}$,
which are regular and in fact diffeomorphic
to the unit cotangent bundle $ST^*Q$
via a diffeomorphism induced by a fibrewise radial isotopy.

It is of particular interest
whether the Lorentz force induced by
the magnetic $2$-form $\sigma$
comes from a potential $1$-form.
Up to lifting $\sigma$ to a certain cover
this will be the case at least 
for so-called weakly exact $2$-forms:
Denoting by $\mu\co\widetilde{Q}\ra Q$
the universal covering of $Q$
we call the $2$-form $\sigma$ on $Q$ {\bf weakly exact}
if there exists a $1$-form $\theta$ on $\widetilde{Q}$
such that $\mu^*\sigma=\rmd\theta$.
In the following we will assume
that the magnetic form $\sigma$ is weakly exact.
Therefore, it is natural to lift the Hamiltonian system
to the universal cover.

The covering map $\mu$ induces a natural map
$T^*\mu\co T^*\widetilde{Q}\ra T^*Q$
that is given by
\[
\tilde{u}\longmapsto\tilde{u}\circ\big(T\mu_{\tilde{\tau}(\tilde{u})}\big)^{-1}\,,
\]
where $\tilde{\tau}\co T^*\widetilde{Q}\ra\widetilde{Q}$
denotes the cotangent bundle of $\widetilde{Q}$
and $\mu_{\tilde{\tau}(\tilde{u})}$ is the germ
of local diffeomorphism at $\tilde{\tau}(\tilde{u})$
that coincides with $\mu$ near $\tilde{\tau}(\tilde{u})$.
Naturallity can be expressed by saying that
$\mu\circ\tilde{\tau}=\tau\circ T^*\mu$
so that
\[
\big(T^*\mu\big)^*\lambda=\tilde{\lambda}\,,
\]
where $\tilde{\lambda}$ denotes the Liouville form
on $T^*\widetilde{Q}$.
Moreover, $T^*\mu$ itself is a covering,
which because of the homotopy equivalence
$T^*\widetilde{Q}\simeq\widetilde{Q}$
can be used to represent the universal covering
of $T^*Q$.
The lifted Hamiltonian $\widetilde{H}=H\circ T^*\mu$
is a Hamiltonian of classical mechanics
\[
\widetilde{H}(\tilde{u})=
\frac12|\tilde{u}|^2_{(\tilde{h})^{\flat}}+
\widetilde{V}\big(\tilde{\tau}(\tilde{u})\big)\,,
\]
$\tilde{u}\in T^*\widetilde{Q}$,
with respect to the lifted metric $\tilde{h}=\mu^*h$
and the lifted potential energy function
$\widetilde{V}=V\circ\mu$.
The preimage of $\{H=k\}$ under $T^*\mu$
is equal to $\{\widetilde{H}=k\}$.
In fact,
an application of the implicit function theorem
yields that the restriction
\[
\pi=T^*\mu|_{\{\widetilde{H}=k\}}
\]
defines a covering projection
\[
M'=\{\widetilde{H}=k\}
\longrightarrow
\{H=k\}=M\,.
\]
Because there exists a $1$-form $\theta$ on $\widetilde{Q}$
such that $\mu^*\sigma=\rmd\theta$ we find that
\[
\big(T^*\mu\big)^*\tau^*\sigma=\rmd (\tilde{\tau}^*\theta)\,,
\]
so that
\[
\big(T^*\mu\big)^*\omega_{\sigma}=
\rmd\tilde{\lambda}+\tilde{\tau}^*\rmd\theta=:
\tilde{\omega}_{\rmd\theta}
\]
has primitive $\tilde{\lambda}+\tilde{\tau}^*\theta$.
The restriction to $TM'$ is denoted by
\[
\alpha=\big(\tilde{\lambda}+\tilde{\tau}^*\theta\big)|_{TM'}\,.
\]
Setting $\omega=\omega_{\sigma}|_{TM}$
we obtain a map
\[
\pi\co\big(M',\rmd\alpha\big)\longrightarrow\big(M,\omega\big)
\]
of odd-dimensional symplectic manifolds.
The question that we will address in the following is
under which conditions the $1$-form $\alpha$
will be a contact form on $M'$.

\begin{rem}
 \label{rem:topofpi}
 The topology of the covering $\pi$
 can be determined as follows.
 By the choice $k>\max_QV$
 the covering space $M'$ is diffeomorphic to
 $ST^*\widetilde{Q}$
 so that $M'$ carries the structure
 of a $S^{n-1}$-bundle over $\widetilde{Q}$.
 The long exact sequence of the induced Serre fibration
 shows that the inclusion $S^{n-1}\ra M'$
 of the typical fibre yields a
 surjection of fundamental groups.
 Therefore, if $Q$ is not a surface, i.e.\ $n>2$,
 then $M'$ is simply connected and
 $\pi$ the universal covering.
 If $Q$ is a surface,
 then in view of uniformization
 $\pi$ is a covering of $M=ST^*Q$
 with covering space $M'$ equal to
 $\R^2\times S^1$ for $Q\neq S^2$;
 otherwise, if $Q=S^2$, then $\pi$ is the trivial one-sheeted
 covering of $\R P^3$.
\end{rem}

%%%%%%%%%%%%%%%%%%%%%%%%%%%%%%%%%%%

\subsection{Bounded primitive\label{subsec:boundprim}}

We assume that the primitive $\theta$ of $\mu^*\sigma$,
viewed as a section $\widetilde{Q}\ra T^*\widetilde{Q}$
of $\tilde{\tau}$, is {\bf bounded} with respect to
the lifted metric $\tilde{h}$,
i.e.
\[
\sup_{\widetilde{Q}}|\theta|_{(\tilde{h})^{\flat}}
<\infty\,.
\]
This will be the case
for negatively curved Riemannian manifolds $(Q,h)$
as it was pointed out by Gromov \cite{gr91},
see Example \ref{ex:gromovsexample} below.
By compactness of $Q$ the lifted potential $\widetilde{V}$
is bounded on $\widetilde{Q}$ so that the function
$\widetilde{H}\circ\theta\co\widetilde{Q}\ra\R$ is bounded from above,
i.e.
\[
\sup_{\widetilde{Q}}\widetilde{H}(\theta)<\infty\,.
\]
The following proposition is contained in
\cite[Lemma 5.1]{cfp10}.

\begin{prop}
 \label{prop:boundgivescontact}
 We assume the situation described
 in Section \ref{subsec:magflow}.
 Let $g$ be a metric on $M$.
 If $\mu^*\sigma$ has a bounded primitive $\theta$,
 then for all $k>\sup_{\widetilde{Q}}\widetilde{H}(\theta)$
 the tuple $\big(\pi\co M'\ra M,\alpha,\omega,g\big)$
 is a virtually contact structure.
 The odd-dimensional symplectic form $\omega$
 of the virtually contact structure is non-exact
 provided $\dim Q\geq3$ and
 the magnetic form $\sigma$ on $Q$ is not exact.
 On closed hyperbolic surfaces $Q$ there exist
 magnetic forms $\sigma$ on $Q$ for which the construction
 yields non-trivial virtually contact structures.
\end{prop}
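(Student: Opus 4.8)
The plan is to verify the two defining conditions of a virtually contact structure, since the identity $\pi^*\omega=\rmd\alpha$ and the covering property of $\pi$ have already been established in Section \ref{subsec:magflow}. Everything reduces to one computation of $\alpha$ along the characteristic line field $\ker\rmd\alpha=\pi^*\ker\omega$, which is spanned by the lift $X_{\widetilde{H}}$ of the Hamiltonian vector field of $\widetilde{H}$ with respect to $\tilde{\omega}_{\rmd\theta}$. Let $E$ be the Liouville vector field on $T^*\widetilde{Q}$ generating the fibrewise scaling, so $\iota_E\rmd\tilde{\lambda}=\tilde{\lambda}$ and $E$ is vertical. Since $\tilde{\tau}^*\rmd\theta$ is pulled back from $\widetilde{Q}$, it is annihilated by $E$, whence $\iota_E\tilde{\omega}_{\rmd\theta}=\tilde{\lambda}$. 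Using $\iota_{X_{\widetilde{H}}}\tilde{\omega}_{\rmd\theta}=\rmd\widetilde{H}$ and Euler's identity $E(\widetilde{H})=|\tilde{u}|^2$ for the degree-two kinetic term, I would obtain on $M'=\{\widetilde{H}=k\}$ the formula
\[
\alpha(X_{\widetilde{H}})=\tilde{\lambda}(X_{\widetilde{H}})+\theta\big(T\tilde{\tau}\,X_{\widetilde{H}}\big)
=-2\big(k-\widetilde{V}\big)+\theta\big(\tilde{u}^{\sharp}\big),
\]
where $\tilde{u}^{\sharp}=T\tilde{\tau}\,X_{\widetilde{H}}$ has length $|\tilde{u}|=\sqrt{2(k-\widetilde{V})}$. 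Hence $|\alpha(X_{\widetilde{H}})|\geq|\tilde{u}|\big(|\tilde{u}|-|\theta|_{(\tilde{h})^{\flat}}\big)$, and the hypothesis $k>\sup_{\widetilde{Q}}\widetilde{H}(\theta)=\sup_{\widetilde{Q}}(\tfrac12|\theta|^2+\widetilde{V})$ forces $|\tilde{u}|^2=2(k-\widetilde{V})>|\theta|^2$ with a uniform positive gap, giving a uniform bound $|\alpha(X_{\widetilde{H}})|\geq\varepsilon>0$.

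Because $\rmd\alpha=\pi^*\omega$ has one-dimensional kernel, the non-vanishing of $\alpha$ on that kernel yields $\alpha\wedge(\rmd\alpha)^{n-1}\neq0$, so $\alpha$ is contact. For $(\text{gb}_2)$ I would observe that along the line $\ker\rmd\alpha$ the quotient $|\alpha(\cdot)|/|\cdot|_{\pi^*g}$ is constant and equals $|\alpha(X_{\widetilde{H}})|/|X_{\widetilde{H}}|_{\pi^*g}$; the denominator is the pull-back of the function $|X_H|_g$, bounded on the closed manifold $M$, so the quotient is bounded below by a positive $c$. For $(\text{gb}_1)$ I would split $\alpha=\tilde{\lambda}|_{TM'}+\tilde{\tau}^*\theta|_{TM'}$. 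The first term equals $\pi^*(\lambda|_{TM})$ since $(T^*\mu)^*\lambda=\tilde{\lambda}$, so its $(\pi^*g)^{\flat}$-norm is the pull-back of a function bounded on the closed $M$. For the second, $|\tilde{\tau}^*\theta(v)|\leq|\theta|_{(\tilde{h})^{\flat}}\,|T\tilde{\tau}\,v|_{\tilde{h}}$, and because $\mu$ and $\pi$ are local isometries with $\mu\circ\tilde{\tau}=\tau\circ\pi$ the operator norm of $T\tilde{\tau}$ agrees pointwise with that of $T\tau$, hence is bounded over the compact $M$. Thus $|\alpha|_{(\pi^*g)^{\flat}}$ is bounded and the tuple is a virtually contact structure.

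For non-exactness I pass to cohomology on $M\cong ST^*Q$: from $\omega=\rmd(\lambda|_{TM})+(\tau|_M)^*\sigma$ one has $[\omega]=j^*[\sigma]$ with $j\co ST^*Q\ra Q$ the bundle projection of fibre $S^{n-1}$. For $n\geq3$ the fibre is $(n-2)$-connected, so the Serre spectral sequence makes $j^*\co H^2_{\dR}(Q)\ra H^2_{\dR}(M)$ injective; as $[\sigma]\neq0$ when $\sigma$ is not exact, $[\omega]\neq0$ and $\omega$ is non-exact. The surface case $n=2$ is genuinely different: the Euler number of $ST^*Q\ra Q$ is $\pm\chi(Q)\neq0$, so the Gysin sequence forces $j^*[\sigma]=0$ and $[\omega]=0$; non-triviality cannot be seen cohomologically.

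For the last statement I would take $Q$ hyperbolic and $\sigma$ a suitable multiple of the area form, which by Gromov \cite{gr91} admits a bounded primitive $\theta$ on $\widetilde{Q}=\Hp^2$, so the first part applies for every $k$ above the Ma\~n\'e critical value $c_u$ of the universal cover. Non-triviality means precisely that $M$ is not of contact type in $(T^*Q,\omega_{\sigma})$; this fails for energies below the Ma\~n\'e critical value $c_0$ of the finite covers, while for a hyperbolic surface the non-amenability of $\pi_1Q$ yields the strict inequality $c_u<c_0$. Choosing $k\in(c_u,c_0)$ then produces a non-trivial virtually contact structure. The hard part is exactly this surface case: establishing the strict gap $c_u<c_0$ and identifying the contact-type threshold with $c_0$, both of which I would import from the Ma\~n\'e-critical-value analysis of Cieliebak--Frauenfelder--Paternain \cite{cfp10}.
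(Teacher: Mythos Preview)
Your proof is correct and follows essentially the same route as the paper: the key estimate $|\alpha(X_{\widetilde{H}})|\geq|\tilde{u}|\big(|\tilde{u}|-|\theta|_{(\tilde{h})^{\flat}}\big)\geq\varepsilon^2$ is the same (up to a harmless sign convention for $X_{\widetilde{H}}$), as are the arguments for \eqref{eq:gb2}, for non-exactness via injectivity of $(\tau|_M)^*$ on $H^2_{\dR}$ when $n\geq3$ (you use the Serre spectral sequence where the paper invokes Gysin), and for the hyperbolic-surface case by importing the Ma\~n\'e-critical-value analysis from the literature (the paper cites \cite{con06} and \cite{cfp10} to the same end). The only notable variation is your treatment of \eqref{eq:gb1}: you observe directly that $\tilde{\lambda}|_{TM'}=\pi^*(\lambda|_{TM})$ and that the operator norm of $T\tilde{\tau}|_{TM'}$ descends to that of $T\tau|_{TM}$ on the compact base, whereas the paper introduces the Sasaki-type metric on $T^*\widetilde{Q}$ to make $T\tilde{\tau}$ an orthogonal projection of norm~$1$---both arguments are valid and yours is arguably more direct.
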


\begin{proof}
 Choose $k$ such that $k>\sup_{\widetilde{Q}}\widetilde{H}(\theta)$.
 As in \cite[Lemma 5.1]{cfp10}
 we find a $\varepsilon>0$ such that
 \[
 |\theta|_{(\tilde{h})^{\flat}}+
 \varepsilon\leq
 \sqrt{2(k-V)}
 \]
 uniformly on $\widetilde{Q}$.
 Notice, that
 \[
 \big(\tilde{\lambda}+\tilde{\tau}^*\theta\big)
 \big(X_{\widetilde{H}}\big)(\tilde{u})=
 |\tilde{u}|^2_{(\tilde{h})^{\flat}}+
 (\tilde{h})^{\flat}(\tilde{u},\theta)\geq
 |\tilde{u}|_{(\tilde{h})^{\flat}}
 \Big(
   |\tilde{u}|_{(\tilde{h})^{\flat}}-
   |\theta|_{(\tilde{h})^{\flat}}
 \Big)\,,
 \]
 where $X_{\widetilde{H}}$ is the Hamiltonian vector field
 of the Hamiltonian system $(\tilde{\omega}_{\rmd\theta},\widetilde{H})$.
 Because $M'$ is the regular level set
 $\{\widetilde{H}=k\}$
 we get 
 $\alpha\big(X_{\widetilde{H}}\big)\geq\varepsilon^2$
 on $M'$.
 In particular,
 $\alpha$ is a contact form on $M'$,
 see \cite[Chapter 4.3]{hoze94}.
 Because $(\tilde{\omega}_{\rmd\theta},\widetilde{H})$
 is the lift of $(\omega_{\sigma},H)$ via
 $T^*\mu$ we obtain $T(T^*\mu)\big(X_{\widetilde{H}}\big)=X_H$.
 Hence, the restriction of $X_{\widetilde{H}}$
 to $M'$ is bounded for any choice of metric on $M$,
 which by construction is a closed manifold.
 This implies \eqref{eq:gb2}.
 
 In order to verify \eqref{eq:gb1} we choose the metric
 on the total space $T^*\widetilde{Q}$
 induced by the splitting
 into horizontal and vertical distribution
 with respect to the Levi--Civita connection of $\tilde{h}$.
 This induces a metric on $M'$
 and turns $T\tilde{\tau}$ into an orthogonal projection operator,
 whose operator norm is bounded by $1$.
 Hence,
 $\tilde{\tau}^*\theta=\theta_{\tilde{\tau}}\circ T\tilde{\tau}$
 and $\tilde{\lambda}_{\tilde{u}}=\tilde{u}\circ T\tilde{\tau}$
 are uniformly bounded because $\theta$ and
 $\frac12|\tilde{u}|^2_{(\tilde{h})^{\flat}}=
 k-\widetilde{V}\big(\tilde{\tau}(\tilde{u})\big)$
 are.
 This shows that the contact form $\alpha$ is bounded.
 
 Therefore,
 $\big(\pi\co M'\ra M,\alpha,\omega,g\big)$
 is a virtually contact structure.
 It remains to show that the virtually contact structure
 has a non-exact odd-dimensional symplectic form
 provided that $n\geq3$ and $\sigma$ is not exact.
 Observe that as in Remark \ref{rem:topofpi}
 one verifies that $M$ is an $S^{n-1}$-bundle over $Q$.
 The Gysin sequence yields an injection
 $(\tau|_M)^*$ from the second de Rham cohomology of $Q$
 into the one of $M$.
 Hence,
 $\tau^*\sigma|_{TM}$ is non-exact too
 so that the restriction $\omega$
 of the twisted symplectic form $\omega_{\sigma}$
 to $TM$ is non-exact.
 This shows non-exactness of the symplectic form
 of the resulting
 virtually contact structures for $n\geq3$.
 
 We discuss non-triviality of the virtually contact structure
 for $n=2$.
 Only closed orientable surfaces $Q$
 admit non-exact $2$-forms.
 By the Gysin sequence
 the $2$-form $\tau^*\sigma|_{TM}$
 is non-exact only for the $2$-torus.
 The argumentation from \cite[Example 0.1.A]{gr91}
 shows that any primitive of $\mu^*\sigma$
 on the cover $\R^2$ is unbounded
 and, therefore, can not result
 into a virtually contact structure.
 This excludes the case that $Q$ is a torus.
 By Remark \ref{rem:topofpi}
 we also can ignore the case $Q$ being $S^2$.
 For the remaining hyperbolic surfaces
 it was shown in
 \cite[Theorem B.1]{con06}
 that there are induced
 virtually contact structures
 $\big(\pi\co M'\ra M,\alpha,\omega,g\big)$
 that are non-trivial,
 cf.\ \cite[p.\ 1833, (ii)]{cfp10}
 and \cite[Chapter 4.3]{hoze94}.
 We remark that examples of contact type are constructed in
 \cite{gin96}.
\end{proof}

\begin{exwith}
 \label{ex:gromovsexample}
 Let $(Q,h)$ be a closed Riemannian manifold
 of negative sectional curvature and let $\sigma$
 be a closed $2$-form on $Q$.
 Then the lift $\mu^*\sigma$
 along the universal covering $\mu\co\widetilde{Q}\ra Q$
 has a bounded primitive $\theta$ on $(\widetilde{Q},\tilde{h})$,
 see \cite[0.2.A.]{gr91}.
 We remark that by the theorem of Hadamard--Cartan
 $\widetilde{Q}$ is diffeomorphic to $\R^n$
 so that $M'=\R^n\times S^{n-1}$
 and $Q$ is an aspherical manifold.
 
 By Preissmann's theorem
 the product $Q_1\times Q_2$
 of two negatively curved manifolds
 does not admit a metric of
 negative sectional curvature.
 But still such a product $Q_1\times Q_2$
 is aspherical and any closed $2$-form
 of the form
 $\sigma_1\oplus\sigma_2$
 has a bounded primitive
 on the universal cover of $Q_1\times Q_2$.
 
 For more examples the reader is referred to \cite{ked09}.
\end{exwith}

%%%%%%%%%%%%%%%%%%%%%%%%%%%%%%%%%%%

\subsection{Somewhere contact\label{subsec:somecont}}

We will use Proposition \ref{prop:boundgivescontact}
for a construction of somewhere contact
virtually contact structures.
The main observation for that is
that if the magnetic term $\sigma$ vanishes,
then the restriction of $\lambda$ to $TM$
defines a contact form on $M=\{H=k\}$
for all $k>\max_QV$.
Indeed, for $\varepsilon>0$ and $u\in M$ satisfying
$\frac12\varepsilon^2\leq k-V\big(\tau(u)\big)$
we get
\[
\lambda\big(X_H\big)(u)=
|u|^2_{h^{\flat}}\geq
\varepsilon^2
\]
so that \cite[Chapter 4.3]{hoze94} applies.
The same holds true for the Hamiltonian system
that is obtained via a lift along $\mu$,
or if $Q$ is replaced by
a relatively compact open subset $U$ of $Q$.

We consider a closed $2$-form $\sigma$ on $Q$
such that $\{\sigma=0\}$ contains a
non-empty relatively compact open subset $U$.
If the lift of $\sigma$ along $\mu$
has a bounded primitive $\theta$
that vanishes on $\mu^{-1}(U)$,
then the resulting virtually contact structure
that is described in Proposition \ref{prop:boundgivescontact}
will be somewhere contact.
Indeed,
the restriction of the contact form
\[
\alpha=\big(\tilde{\lambda}+\tilde{\tau}^*\theta\big)|_{TM'}
\]
to $M'\cap\big(T^*\mu\big)^{-1}(T^*U)$
equals the one of $\tilde{\lambda}|_{TM'}$,
which is mapped to the contact form
\[
\lambda|_{T\big(M\cap T^*U\big)}
\]
via $\pi=T^*\mu|_{M'}$.

\begin{lem}
 \label{lem:boundsomcont}
 Let $\sigma$ be a closed $2$-form on $Q$
 und $V$ be a non-empty relatively compact open
 subset of $Q$ such that $\sigma|_V=0$.
 Let $\theta$ be a bounded primitive of $\mu^*\sigma$.
 Then there exist an open subset $U\subset\bar{U}\subset V$ of $Q$
 and a bounded primitive
 $\hat{\theta}$ of $\mu^*\sigma$
 that coincides with $\theta$ on the complement of $\mu^{-1}(V)$
 and vanishes on $\mu^{-1}(U)$
 such that the virtually contact structure
 \[
 \Big(\pi\co M'\ra M,
 \alpha=\big(\tilde{\lambda}+\tilde{\tau}^*\hat{\theta}\big)|_{TM'},
 \omega,g\Big)
 \]
 obtained
 in Proposition \ref{prop:boundgivescontact}
 is somewhere contact for all
 $k>\sup_{\widetilde{Q}}\widetilde{H}(\hat{\theta})$.
 The odd-dimensional symplectic form $\omega$
 of the virtually contact structure is non-exact
 provided $\dim Q\geq3$ and
 the magnetic form $\sigma$ on $Q$ is not exact.
\end{lem}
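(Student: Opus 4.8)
The plan is to reduce the statement to the construction of a suitable primitive. Indeed, the discussion preceding the lemma shows that once we have a bounded primitive $\hat{\theta}$ of $\mu^*\sigma$ that vanishes on $\mu^{-1}(U)$ for some open $U$ with $\bar{U}\subset V$, Proposition \ref{prop:boundgivescontact} applied to $\hat{\theta}$ yields a virtually contact structure for every $k>\sup_{\widetilde{Q}}\widetilde{H}(\hat{\theta})$, and the vanishing on $\mu^{-1}(U)$ makes it somewhere contact with distinguished open set $M\cap T^*U$ and contact form $\lambda|_{T(M\cap T^*U)}$. Moreover the asserted non-exactness of $\omega$ for $\dim Q\geq3$ and $\sigma$ non-exact depends only on $\sigma$, $Q$ and the bundle $M\ra Q$ through the Gysin argument of Proposition \ref{prop:boundgivescontact}, hence is independent of the chosen primitive and carries over verbatim. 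So the whole content is to modify the given bounded primitive $\theta$ into a bounded primitive $\hat{\theta}$ that agrees with $\theta$ off $\mu^{-1}(V)$ and vanishes on $\mu^{-1}(U)$.

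To construct $\hat{\theta}$, first I would fix nested relatively compact open balls $U\subset\bar{U}\subset W\subset\bar{W}\subset V$ in $Q$, chosen small enough that $W$ is evenly covered by $\mu$ with contractible sheets; write $\mu^{-1}(W)=\bigsqcup_{\gamma}W_{\gamma}$, so that each $\mu|_{W_{\gamma}}\co W_{\gamma}\ra W$ is an isometry for the lifted metric $\tilde{h}$. Pick a cut-off $\chi\co Q\ra[0,1]$ with $\chi\equiv1$ on $\bar{U}$ and $\mathrm{supp}\,\chi\subset W$, and set $\rho=\chi\circ\mu$, an equivariant cut-off supported in $\bigsqcup_{\gamma}W_{\gamma}$. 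Since $\sigma|_{W}=0$, the form $\theta$ is closed on each simply connected sheet $W_{\gamma}$, hence $\theta|_{W_{\gamma}}=\rmd f_{\gamma}$ for a smooth function $f_{\gamma}$, which I normalise to vanish at a chosen lift of the centre of $U$. Gluing by $F=\rho f_{\gamma}$ on $W_{\gamma}$ and $F=0$ elsewhere gives a smooth function on $\widetilde{Q}$, and I set $\hat{\theta}=\theta-\rmd F$. Then $\rmd\hat{\theta}=\mu^*\sigma$; off $\mu^{-1}(W)$ we have $F=0$, so $\hat{\theta}=\theta$ there and in particular off $\mu^{-1}(V)$; and on $\mu^{-1}(U)$ we have $\rho\equiv1$, so $\rmd F=\rmd f_{\gamma}=\theta$ and $\hat{\theta}=0$.

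The main point, and the only place where the hypotheses on $\theta$ are really used, is the uniform boundedness of $\hat{\theta}$. Writing $\rmd F=f_{\gamma}\,\rmd\rho+\rho\,\theta$ on $W_{\gamma}$, the term $\rho\,\theta$ is bounded because $\theta$ is, and $\rmd\rho=\mu^*\rmd\chi$ is bounded because $\mu$ is a local isometry and $\chi$ is fixed with compact support. It remains to bound $\sup_{\gamma}\sup_{W_{\gamma}}|f_{\gamma}|$; here I expect the decisive step to be the observation that the sheets $W_{\gamma}$ are mutually isometric to the fixed set $W$, so they share a common diameter bound, whence the oscillation estimate $|f_{\gamma}(p)-f_{\gamma}(q)|\leq\big(\sup_{\widetilde{Q}}|\theta|_{(\tilde{h})^{\flat}}\big)\cdot\diam W$, obtained by integrating $\theta$ along a short path in $W_{\gamma}$, is uniform in $\gamma$. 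Combined with the normalisation this bounds $f_{\gamma}$ uniformly, so $\hat{\theta}$ is a bounded primitive. Feeding $\hat{\theta}$ into Proposition \ref{prop:boundgivescontact} and invoking the somewhere-contact discussion above then completes the argument.
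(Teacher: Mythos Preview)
Your argument is correct and follows essentially the same route as the paper: reduce to constructing $\hat{\theta}$, write $\theta=\rmd f_{\gamma}$ on each evenly covered sheet, and cut the potentials off with an equivariant bump; the only cosmetic differences are the reversed cut-off convention (your $\rho$ corresponds to $1-\chi$ in the paper) and that the paper bounds the potentials via the explicit radial Poincar\'e integral and the mean value theorem, whereas you use the equivalent oscillation estimate from integrating $\theta$ along a path of length at most $\diam W$.
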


\begin{proof}
 In view of the preceding remarks
 it is enough to show that $\mu^*\sigma$
 has a bounded primitive
 that vanishes on $\mu^{-1}(U)$
 for an open subset $U$ of $Q$.
 In order to do so
 we will assume that $\sigma$ vanishes on an
 embedded closed disc $D^n\cong V\subset Q$.
 The open set $U$ is taken to be
 the Euclidean ball $B_{1/2}(0)$ inside $D^n$.
 Additionally,
 we choose $V$ so small that $\mu^{-1}(V)$
 decomposes into a disjoint union of subsets
 $V^p$ of the universal cover of $Q$
 where the union is taken over all $p\in\mu^{-1}(q)$,
 $q\equiv0$, so that
 \[
 \mu^p:=\mu|_{V^p}\co V^p\lra V
 \]
 is a diffeomorphism for all $p$.
 In a similar way the preimage of $U$ is decomposed into
 sets denoted by $U^p$.
 Taking the metric $\tilde{h}=\mu^*h$ on $\widetilde{Q}$
 the maps $\mu^p$ are in fact isometries.
 
 Consider the given bounded primitive
 $\theta$ of $\mu^*\sigma$
 and denote the restriction of $\theta$
 to $V^p$ by $\theta^p:=\theta|_{V^p}$.
 Notice, that $\rmd\theta^p=0$ for all $p$.
 By the Poincar\'e--Lemma there exists a function
 $f^p\co V^p\ra\R$ such that $\rmd f^p=\theta^p$.
 Choose a cut-off function $\chi$ on $Q$
 that vanishes on $B_{1/2}(0)\cong U$
 and is identically $1$ in a neighbourhood of $Q\setminus\Int V$.
 Set $\chi^p=\chi\circ\mu^p$ and
 \[
 \hat{\theta}^p=\rmd(\chi^pf^p)
 \]
 and observe that
 $\hat{\theta}^p|_{U^p}=0$.
 This defines a $1$-form $\hat{\theta}$
 on $\widetilde{Q}$
 that is equal to $\theta$ in the complement of the $V^p$'s
 and coincides with $\hat{\theta}^p$ on each $V^p$.
 By construction $\hat{\theta}$ is a primitive of
 $\mu^*\sigma$ that vanishes on $\mu^{-1}(U)$.
 
 It remains to show boundedness of $\hat{\theta}$
 on $(\widetilde{Q},\tilde{h})$.
 For this it will suffice to obtain a bound for
 \[
 \hat{\theta}^p=f^p\rmd\chi^p+\chi^p\theta^p
 \]
 independently of $p$.
 Of course $\chi^p$ is bounded by $1$.
 By chain rule we have
 \[
 \rmd\chi^p=\rmd\chi\circ T\mu^p\,.
 \]
 Because $\mu^p$ is an isometry
 we obtain a uniform bound on
 $|\rmd\chi^p|_{(\tilde{h})^{\flat}}$.
 Moreover,
 $|\theta^p|_{(\tilde{h})^{\flat}}$ can be estimated
 by the supremum of $|\theta|_{(\tilde{h})^{\flat}}$,
 which is bounded by assumption.
 Therefore,
 in order to obtain a uniform bound on
 $|\hat{\theta}^p|_{(\tilde{h})^{\flat}}$
 we need a uniform bound on $|f^p|$.
 
 For this recall the Poincar\'e--Lemma.
 Identify $D^n\cong V$ with $V^p$
 isometrically via $\mu^p$.
 In order to simplify the following computation
 in local coordinates
 we suppress the superscript $p$ from the notation.
 The $1$-form $\theta$,
 which got identified with $\theta^p$,
 is closed.
 Write
 $\theta_{\mathbf{x}}=\theta_j(\mathbf{x})\rmd x^j$
 using summation convention
 for $\mathbf{x}=(x^1,\ldots,x^n)$ in $D^n$.
 For $t\in[0,1]$ we get
 $\theta_{t\mathbf{x}}(\mathbf{x})=\theta_j(t\mathbf{x})x^j$
 so that a primitive of $\theta$
 is given by
 \[
 f(\mathbf{x})=\int_0^1\theta_{t\mathbf{x}}(\mathbf{x})\rmd t\,.
 \]
 Hence, by the mean value theorem there exists
 $t_0\in[0,1]$ such that
 \[
 |f(\mathbf{x})|\leq
 |\theta_{t_0\mathbf{x}}(\mathbf{x})|\leq
 \|\theta\|_{h_{t_0\mathbf{x}}}|\mathbf{x}|_{h_{t_0\mathbf{x}}}\,.
 \]
 Observe that the operator norm $\|\theta\|_h$
 equals $|\theta|_{(\tilde{h})^{\flat}}$ pointwise
 and is, therefore, uniformly bounded.
 Moreover,
 by compactness of $D^n$
 the restriction of the metric $h$ to $D^n$
 is uniformly equivalent to the Euclidean metric
 so that $|\mathbf{x}|_{h_{t_0\mathbf{x}}}$
 admits a uniform bound.
 Therefore, the same holds true for $|f(\mathbf{x})|$.
 Consequently,
 the perturbed primitive $\hat{\theta}$ of $\mu^*\sigma$
 is bounded.
 
 In order to finish the proof of the lemma
 we have to verify non-exactness of the
 odd-dimensional symplectic form
 of the resulting
 virtually contact structure if $\dim Q\geq3$
 and $\sigma$ is non-exact.
 But this follows exactly as for Proposition \ref{prop:boundgivescontact}.
\end{proof}

%%%%%%%%%%%%%%%%%%%%%%%%%%%%%%%%%%%

\subsection{Proof of Theorem \ref{thm:mainthm}\label{subsec:pfofthm}}

In view of Example \ref{ex:gromovsexample}
we choose a closed Riemannian manifold $(Q,h)$
that is not simply connected.
Moreover,
choose a closed non-exact $2$-form $\sigma$ on $Q$
whose lift to the universal cover has a bounded primitive.
By a use of a cut-off function $\chi$
as in the proof of Lemma \ref{lem:boundsomcont}
we can cut-off a local primitive $\theta_V$
of $\sigma|_V$ for an embedded closed disc $V$.
Setting $\sigma$ equal to $\rmd(\chi\theta_V)$
on $V$ this results into a new magnetic $2$-form
that vanishes somewhere.
Notice,
that the cohomology class of $\sigma$ is unchanged
and the lift of $\sigma$ still has a bounded primitive.
In this situation
Lemma \ref{lem:boundsomcont}
yields a somewhere contact
virtually contact structure
$\big(\pi\co M'\ra M,\alpha,\omega,g\big)$
with $\omega$ being non-exact if $\dim Q\geq3$
and with $M$ being not simply connected,
cf.\ Remark \ref{rem:topofpi}.
With these preliminaries
Theorem \ref{thm:mainthm} will be a consequence
of the following theorem if $n\geq3$.

\begin{prop}
\label{prop:mplusmandmplust}
 Let $\big(\pi\co M'\ra M,\alpha,\omega,g\big)$
 be a somewhere contact
 virtually contact structure with non-exact $\omega$
 and denote by $(T,\ker\alpha_T)$ a
 contact manifold.
 Assume that $M$ and $T$
 are of dimension $2n-1$.
 Then the connected sums
 $M\#M$ and $M\#T$
 admit somewhere contact
 virtually contact structures
 whose odd-dimensional symplectic forms are non-exact.
 Moreover,
 if $M$ and $T$ are not simply connected,
 then the belt spheres of the
 connected sums $M\#M$ and $M\#T$
 represent non-trivial elements in $\pi_{2n-2}$.
\end{prop}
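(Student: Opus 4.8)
The plan is to read off both connected sums directly from the covering contact connected sum of Section~\ref{subsec:covconsum}, and then to detect the belt sphere by passing to the universal cover. For $M\#M$ I would feed two copies of the given somewhere contact virtually contact structure $\big(\pi\co M'\ra M,\alpha,\omega,g\big)$ into that construction, using the identity bijection between the fibres of $\pi$. For $M\#T$ I would first view the contact manifold $(T,\ker\alpha_T)$ as a somewhere (in fact everywhere) contact virtually contact structure $\big(\pi_T\co T'\ra T,\alpha_T,\rmd\alpha_T,g_T\big)$: take $T'$ to be the disjoint union of copies $T_y$ of $T$ indexed by the fibre $\pi^{-1}(x)$, let $\pi_T$ be the identity on each sheet, and equip each $T_y$ with the contact form $\alpha_T$ and a metric isometric to a fixed $g_T$ on the closed manifold $T$. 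Then bounded geometry is automatic and the fibres of $\pi$ and $\pi_T$ are in canonical bijection, so the construction applies. Both resulting structures are somewhere contact because on the handle region the genuine contact form $\alpha_{U_1}\#\alpha_{U_2}$ pulls back to $\alpha$ under the covering. Finally, taking $M$ as the first factor in each case, its symplectic form $\omega$ is non-exact by hypothesis, so Lemma~\ref{lem:nontrivaftercovconsum} yields that the odd-dimensional symplectic forms on $M\#M$ and $M\#T$ are non-exact as well.

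For the belt sphere I write $N=M_1\#M_2$ for either connected sum, with $G_i=\pi_1M_i$ both non-trivial by assumption. Since $2n-1\geq3$, van Kampen gives $\pi_1N\cong G_1*G_2$, which is infinite, so the universal cover $\widetilde N$ is non-compact. The belt sphere $\Sigma=\{3/2\}\times S^{2n-2}$ is an embedded $(2n-2)$-sphere separating $N$ into the two punctured summands $M_1\setminus D_1^{2n-1}$ and $M_2\setminus D_2^{2n-1}$. As $2n-2\geq2$, $\Sigma$ is simply connected, so each component of $\pi^{-1}(\Sigma)$ maps homeomorphically onto $\Sigma$; fix one such lift $\widetilde\Sigma$. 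The key structural input is that $\widetilde N$ is the infinite equivariant connected sum modelled on the Bass--Serre tree of $G_1*G_2$: its vertex pieces are copies of the universal covers of the $M_i$ with the lifted discs removed, and its edge spheres are exactly the components of $\pi^{-1}(\Sigma)$. Thus $\widetilde\Sigma$ corresponds to an edge and separates $\widetilde N$ into two pieces $B_1,B_2$, and since $G_1,G_2\neq1$ the two subtrees are infinite, so both $B_j$ are connected and non-compact.

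Next I claim $[\widetilde\Sigma]\neq0$ in $H_{2n-2}(\widetilde N;\Z)$. I would choose a properly embedded line $\ell\co\R\ra\widetilde N$ leaving every compact set as $t\to\pm\infty$, lying in $B_1$ for $t\ll0$ and in $B_2$ for $t\gg0$, and meeting $\widetilde\Sigma$ transversally in a single point; this is possible precisely because both $B_j$ are connected and non-compact. The intersection pairing of $[\widetilde\Sigma]\in H_{2n-2}(\widetilde N)$ with the locally finite class $[\ell]\in H_1^{\mathrm{lf}}(\widetilde N)$ is then $\pm1$, forcing $[\widetilde\Sigma]\neq0$. Now suppose the belt sphere were null-homotopic in $N$. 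Because $2n-2\geq2$, the covering induces an isomorphism $\pi_{2n-2}(\widetilde N)\cong\pi_{2n-2}(N)$ carrying the class of $\widetilde\Sigma$ to that of $\Sigma$, so $\widetilde\Sigma$ would be null-homotopic in $\widetilde N$ and hence null-homologous, i.e.\ $[\widetilde\Sigma]=0$ in $H_{2n-2}(\widetilde N)$, contradicting the claim. Therefore the belt sphere represents a non-trivial element of $\pi_{2n-2}N$.

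The main obstacle is this last homological step. The belt sphere is a separating sphere in the closed manifold $N$ and is therefore null-homologous there, so its non-triviality cannot be seen in $N$ itself; it must be produced by passing to $\widetilde N$, where the two sides of the lift become non-compact and the intersection argument becomes available. Correctly identifying $\widetilde N$ as the tree-like infinite connected sum, so that a lift of $\Sigma$ is genuinely separating with non-compact complementary pieces, is the crux of the proof; once this is in place the existence and non-exactness assertions follow routinely from Section~\ref{subsec:covconsum} and Lemma~\ref{lem:nontrivaftercovconsum}.
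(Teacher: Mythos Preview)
Your argument is correct and follows essentially the same route as the paper: the constructions of the virtually contact structures on $M\#M$ and $M\#T$, the somewhere contact property via the handle region, and the non-exactness via Lemma~\ref{lem:nontrivaftercovconsum} all match the paper exactly. For the belt sphere the paper simply cites the proof of \cite[Proposition~3.10]{Hatcher}, whereas you unpack that argument---the Bass--Serre tree description of $\widetilde N$, the separating lift $\widetilde\Sigma$ with both complementary pieces non-compact, and the intersection with a proper line---which is precisely the content behind that reference; your version is more self-contained but not a different idea.
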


\begin{proof}
 Denote by $x\in U$ the base point of $M$
 where $U$ is an open subset of $M$
 according to the definition of being somewhere contact,
 see Section \ref{subsec:definitions}.
 Performing a covering connected sum of
 $\big(\pi\co M'\ra M,\alpha,\omega,g\big)$
 with itself for any bijection $b$
 of the base point fibre $\pi^{-1}(x)$
 yields a virtually contact structure
 on $M\#M$,
 see Section \ref{subsec:covconsum}.
 In order to obtain a virtually contact structure
 on $M\#T$ consider the covering
 obtained by the disjoint union
 of $(T\times\{y\},\alpha_T)$, $y\in\pi^{-1}(x)$
 and perform covering connected sum.
 Non-exactness of the odd-dimensional symplectic form
 of the constructed virtually contact structures
 follows with Lemma \ref{lem:nontrivaftercovconsum}.
 Further,
 in both cases the resulting covering
 contact manifold admits a contact embedding
 of the upper boundary of a standard symplectic $1$-handle
 as it is discussed in Remark \ref{rem:conthandle}.
 In particular,
 the virtually contact structures
 on the surged manifolds are somewhere contact.
 Moreover,
 if $M$ and $T$ both are not simply connected,
 then the belt sphere represents a non-trivial homotopy class
 in $\pi_{2n-2}$ by 
 the proof of \cite[Proposition 3.10]{Hatcher}.
\end{proof}

This finishes the proof of Theorem \ref{thm:mainthm} if $n\geq3$.
The reason why the above argumentation does not work
for $n=2$ is that the odd-dimensional symplectic structure
obtained from a twisted cotangent bundle
of a surface $Q$ is necessarily exact if $Q$ is not a $2$-torus,
cf.\ the discussion on the end of the proof of
Proposition \ref{prop:boundgivescontact}.
In order to construct non-trivial
virtually contact structures
in dimension $3$
that are a non-trivial connected sum
we make the following observations:

\begin{prop}
\label{prop:pertofcontwnegcurv}
 Let $(M,\ker\alpha_M)$ be a closed connected contact manifold.
 Assume that $M$ carries a metric of negative sectional curvature
 and a non-exact closed $2$-form $\eta$.
 Then there exists a somewhere contact
 virtually contact structure
 $\big(\pi\co M'\ra M,\alpha,\omega,g\big)$
 on $M$
 such that $\omega$ is cohomologous to
 a positive multiple of $\eta$.
\end{prop}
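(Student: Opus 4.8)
The plan is to realize $M$ as the base of its own universal covering and to build the contact form on the cover by correcting the pullback of $\alpha_M$ with a bounded primitive of a cohomologous modification of $\eta$. Let $\mu\co\widetilde{M}\ra M$ be the universal covering, set $\pi:=\mu$ and $M':=\widetilde{M}$, and equip $\widetilde{M}$ with the lifted metric $\tilde g=\mu^*g$, where $g$ is the given metric of negative sectional curvature. Since $M$ is closed and negatively curved, Example~\ref{ex:gromovsexample} (i.e.\ Gromov~\cite{gr91}) guarantees that the lift along $\mu$ of any closed $2$-form on $M$ admits a bounded primitive on $(\widetilde{M},\tilde g)$. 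Before exploiting this I would first replace $\eta$ by a cohomologous closed $2$-form that vanishes on a small ball, exactly as in the proof of Theorem~\ref{thm:mainthm}: choose an embedded disc $V\subset M$, write $\eta|_V=\rmd\theta_V$ by the Poincar\'e lemma, pick a cut-off $\chi$ that equals $1$ near $\partial V$ and on $M\setminus V$ and vanishes on an inner ball $U\subset V$, and let $\eta'$ be $\rmd(\chi\theta_V)$ on $V$ and $\eta$ on $M\setminus V$. Then $\eta'$ is closed, vanishes on $U$, and differs from $\eta$ by the differential of $(\chi-1)\theta_V$ extended by zero, so it is cohomologous to $\eta$ and in particular non-exact. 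Applying the cut-off construction from the proof of Lemma~\ref{lem:boundsomcont} to the primitive of $\mu^*\eta'$ furnished by Gromov's theorem then yields a bounded primitive $\beta$ of $\mu^*\eta'$ on $\widetilde{M}$ that vanishes on $\mu^{-1}(U)$.

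Next I would fix a small constant $c>0$ and set
\[
\alpha=\mu^*\alpha_M+c\beta\,,\qquad \omega=\rmd\alpha_M+c\eta'\,.
\]
Because $\rmd\beta=\mu^*\eta'$ one has $\pi^*\omega=\rmd(\mu^*\alpha_M)+c\,\mu^*\eta'=\rmd\alpha$, so $\alpha$ is a primitive of the lift of $\omega$, and $[\omega]=c[\eta']=c[\eta]$ is the required positive multiple of $[\eta]$ (hence $\omega$ is non-exact). Since $\rmd\alpha_M$ has maximal rank $2n-2$ on the closed manifold $M$, the perturbation $\omega=\rmd\alpha_M+c\eta'$ retains maximal rank for $c$ small, so $\ker\omega$ is a $1$-dimensional distribution and $(M,\omega)$ is an odd-dimensional symplectic manifold. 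Likewise $\mu^*\alpha_M$ is a contact form on $\widetilde{M}$ (a $\mu$-pullback of the nowhere-vanishing contact volume form), and the forms $\mu^*\alpha_M$, $\mu^*\rmd\alpha_M$, $\mu^*\eta'$ are \emph{uniformly} bounded with respect to $\tilde g$, being pullbacks of smooth forms on the compact $M$ along the local isometry $\mu$, while $\beta$ is bounded by construction. Consequently $\alpha\wedge(\rmd\alpha)^{n-1}$ equals $\mu^*\big(\alpha_M\wedge(\rmd\alpha_M)^{n-1}\big)$ plus a uniformly bounded term of order $c$, and the leading term is bounded away from zero by compactness of $M$; hence for $c$ small $\alpha$ is a contact form on $\widetilde{M}$.

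It remains to verify bounded geometry and the somewhere-contact property, taking the bounded-geometry metric to be $g$ itself so that $\pi^*g=\tilde g$. Condition \eqref{eq:gb1} holds because $|\mu^*\alpha_M|_{\tilde g^\flat}=|\alpha_M|_{g^\flat}\circ\mu$ is bounded by compactness of $M$ and $|\beta|_{\tilde g^\flat}$ is bounded by construction. For \eqref{eq:gb2}, note that $\ker\rmd\alpha$ is spanned by the Reeb field $R_\alpha$ determined by $\alpha(R_\alpha)=1$ and $\iota_{R_\alpha}\rmd\alpha=0$; the uniform upper bounds on $\alpha$ and $\rmd\alpha$ together with the uniform lower bound on $\alpha\wedge(\rmd\alpha)^{n-1}$ give a uniform upper bound on $|R_\alpha|_{\tilde g}$, which is precisely \eqref{eq:gb2}. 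Finally, over $\mu^{-1}(U)$ we have $\beta=0$, so $\alpha=\mu^*\alpha_M=\pi^*(\alpha_M|_U)$ and $\omega|_U=\rmd(\alpha_M|_U)$; thus $\alpha_M|_U$ witnesses that $\big(\pi\co M'\ra M,\alpha,\omega,g\big)$ is somewhere contact.

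The main obstacle is to secure the contact condition and the geometric bounds \eqref{eq:gb1}, \eqref{eq:gb2} \emph{uniformly} on the non-compact cover $\widetilde{M}$. What makes this manageable is that every form entering the construction except $\beta$ is a $\mu$-pullback of a form on the compact manifold $M$ and is therefore uniformly bounded with respect to $\tilde g$, while $\beta$ is bounded by Gromov's theorem; smallness of $c$ then upgrades the pointwise openness of the maximal-rank and contact conditions to the required uniform estimates.
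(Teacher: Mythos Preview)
Your proof is correct and follows essentially the same route as the paper's: modify $\eta$ to vanish on a ball, take a bounded primitive on the universal cover that vanishes over that ball (via Gromov's result and the cut-off from Lemma~\ref{lem:boundsomcont}), and set $\alpha=\pi^*\alpha_M+c\beta$, $\omega=\rmd\alpha_M+c\eta'$ for small $c$. The only notable difference is in the verification of \eqref{eq:gb2}: the paper argues by contradiction (using that all data except $\beta$ descend to the compact base), whereas you give a direct quantitative argument via the uniform lower bound on $\alpha\wedge(\rmd\alpha)^{n-1}$ and the uniform upper bound on $\rmd\alpha$, which indeed forces $|\alpha(e)|$ to be bounded below for any unit vector $e\in\ker\rmd\alpha$ and hence bounds $|R_\alpha|$ from above.
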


\begin{proof}
 By using a suitable local cut-off of $\eta$
 we assume that there exists an open subset
 $V\subset M$ such that $\eta|_V=0$.
 This does not change the cohomology class of $\eta$.
 As explained in the proof of
 Lemma \ref{lem:boundsomcont}
 we can further assume that
 $\theta|_{\pi^{-1}(U)}=0$ for an open subset
 $U\subset\bar{U}\subset V$ of $M$.
 With \cite[0.2.A.]{gr91} $\pi^*\eta$
 has a bounded primitive $\theta$
 on the universal cover denoting by
 $\pi$ the corresponding covering map.
 For $\varepsilon>0$ sufficiently small
 the lift of the $2$-form
 $\omega=\rmd\alpha_M+\varepsilon\eta$
 along $\pi$ has a bounded primitive
 $\alpha=\pi^*\alpha_M+\varepsilon\theta$
 in the sense of \eqref{eq:gb1}
 that is a contact form.
 By shrinking $\varepsilon>0$ if necessary
 the contact form $\alpha$ satisfies \eqref{eq:gb2}
 as an argumentation by contradiction shows.
\end{proof}

Observe that $M$ is aspherical
in contrast to the examples given in
Proposition \ref{prop:mplusmandmplust}
and that by the theorem of Hadamard--Cartan
the compact manifold $M$
can not be simply connected.
Examples in dimension $3$ can be obtained as follows:

\begin{exwith}
 \label{ex:anosovthurston}
 Let $M$ be the mapping torus
 of a closed orientable surface of higher genus
 with monodromy diffeomorphism being
 pseudo-Anosov.
 By a theorem of Thurston \cite{thu88}
 $M$ is hyperbolic.
 Moreover, the Betti numbers $b_1=b_2$
 of $M$ are non-zero
 so that a non-exact closed $2$-form $\eta$
 can be found.
 By Martinet's theorem \cite[Theorem 4.1.1]{gei08}
 $M$ has a contact form $\alpha_M$.
\end{exwith}

A covering contact connected sum of
the somewhere contact virtually contact manifold
$M$ obtained with Example \ref{ex:anosovthurston}
and Proposition \ref{prop:pertofcontwnegcurv}
as described in Proposition \ref{prop:mplusmandmplust}
results in a non-trivial virtually contact manifold.
such that the related belt sphere
represents a non-trivial class in $\pi_{2n-2}$.
This finishes the proof of Theorem \ref{thm:mainthm}.
\hfill Q.E.D.

%%%%%%%%%%%%%%%%%%%%%%%%%%%%%%%%%%%

\subsection{Being prime\label{subsec:st*qisprim}}

Recall that a closed connected manifold $M$
is called {\bf prime}
if whenever written as a connected sum $M=M_1\#M_2$
one of the summands $M_1$ and $M_2$
is a homotopy sphere.
The connected sum with a homotopy sphere
is called to be {\bf trivial}.
We remark that the virtually contact manifolds
constructed in Section \ref{subsec:pfofthm}
are obtained by a non-trivial connected sum
and are, therefore, not prime.
This follows from the corresponding belt sphere
not to be contractible inside the surged manifold.

The aim of the following proposition is
to show that the examples
of virtually contact structures
given in Section \ref{subsec:pfofthm}
differ from the one obtained on
unit cotangent bundles $M\cong ST^*Q$
of $n$-dimensional
Riemannian manifolds of negative
sectional curvature studied
in Section \ref{subsec:boundprim}.
Recall, that by Hadamard--Cartan's theorem
the universal cover of a Riemannian manofold
of non-positive sectional curvature is
diffeomorphic to $\R^n$.

\begin{prop}
 The total space $ST^*Q$
 of the unit cotangent bundle
 of a closed connected
 aspherical $n$-dimensional manifold $Q$
 with respect to any metric on $Q$ is prime.
\end{prop}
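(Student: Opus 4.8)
The plan is to show that any decomposition $ST^*Q=M_1\#M_2$ forces one summand to be a homotopy sphere, by combining a fundamental-group obstruction to nontrivial free products with a covering-space argument that makes the simply connected summand a homology sphere. Throughout write $G=\pi_1 Q$ and $m=2n-1=\dim ST^*Q$. I first record that $ST^*Q$ is orientable: it is a two-sided hypersurface (a radial level set) in the total space of $T^*Q$, which is orientable because $w_1(T^*Q)=w_1(Q)=w_1(TQ)$. Hence oriented connected sums and $\Z$-coefficient Poincar\'e duality are available.

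The homotopy-theoretic input comes from asphericity. Since $Q$ is aspherical, its universal cover $\widetilde{Q}$ is contractible, so the pullback of the bundle $ST^*Q\to Q$ to $\widetilde{Q}$ is the trivial bundle $\widetilde{Q}\times S^{n-1}\simeq S^{n-1}$; this is the regular cover $\bar{M}\to ST^*Q$ with deck group $G$, and every $H_k(\bar{M};\Z)$ is finitely generated. Moreover $\pi_1 ST^*Q$ is infinite and one-ended: when $n\geq3$ one has $\pi_1 ST^*Q\cong G$ (the fibre $S^{n-1}$ is simply connected), while for $n=2$ the space $ST^*Q$ is itself a closed aspherical $3$-manifold; in both cases $\pi_1 ST^*Q$ is the fundamental group of a closed aspherical manifold of dimension $d\geq2$, and one-endedness follows from $H^1(\pi;\Z\pi)\cong H_{d-1}(\pi;\Z\pi)\cong H_{d-1}(\widetilde{\phantom{Q}};\Z)=0$, the homology of a contractible universal cover.

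Now the connecting sphere $S^{2n-2}$ is simply connected ($2n-2\geq2$), so van Kampen gives $\pi_1 ST^*Q\cong\pi_1 M_1\ast\pi_1 M_2$. A one-ended group is freely indecomposable, so after relabelling $\pi_1 M_2=1$, and then $M_2$ lifts into the cover $\bar{M}$ above. As $M_2$ is simply connected and $|G|=\infty$, each of the infinitely many lifts of the connect-sum ball carries a full copy of $M_2$, exhibiting $\bar{M}$ as the induced cover $\bar{M}_1$ of $M_1$ connect-summed with one copy of $M_2$ at every point of the $G$-orbit. A Mayer--Vietoris computation along the disjoint gluing spheres then gives, for $1<k<2n-2$,
\[
\widetilde{H}_k(\bar{M};\Z)\;\cong\;\widetilde{H}_k(\bar{M}_1;\Z)\oplus\bigoplus_{g\in G}\widetilde{H}_k(M_2;\Z).
\]
Since the left-hand side is finitely generated and $G$ is infinite, each $\widetilde{H}_k(M_2;\Z)$ with $1<k<2n-2$ vanishes; the degree $k=1$ is handled by simple connectivity and $k=2n-2$ by Poincar\'e duality, $H_{2n-2}(M_2)\cong H^1(M_2)=0$. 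Thus $M_2$ is a simply connected integral homology $(2n-1)$-sphere, hence a homotopy sphere by Hurewicz and Whitehead, so $ST^*Q$ is prime.

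I expect the main obstacle to be the covering-space bookkeeping in the last step: one must select the correct finitely-generated-homology cover (the universal cover for $n\geq3$, the $G$-cover for $n=2$), verify that the simply connected summand lifts as a disjoint family indexed by the infinite group $G$, and justify the Mayer--Vietoris isomorphism in the clean degree range $1<k<2n-2$ so that finite generation of $H_*(\bar{M})$ can be played off against infinite index. One-endedness of $\pi_1 ST^*Q$ is the other delicate point, but it reduces cleanly to the Poincar\'e-duality computation for the relevant aspherical manifold.
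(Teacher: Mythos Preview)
Your proof is correct and takes a genuinely different route from the paper. The paper argues geometrically: for $n=2$ it invokes Alexander's theorem that $\R^3$ is irreducible, and for $n\geq3$ it lifts the belt sphere $S_b$ to the universal cover $\widetilde{Q}\times S^{n-1}$, shows an innermost lift bounds a simply connected homology ball $\Omega_0$ via Mayer--Vietoris, and then applies the $h$-cobordism theorem to conclude that $\Omega_0$ is diffeomorphic to a disc, whence $S_b$ bounds a disc downstairs. Your argument is instead purely algebraic and homotopy-theoretic: you use one-endedness of $\pi_1ST^*Q$ (as the fundamental group of a closed aspherical manifold of dimension $\geq2$) to rule out a nontrivial free-product splitting, forcing one summand to be simply connected, and then exploit that the $G$-cover $\widetilde{Q}\times S^{n-1}$ has finitely generated homology while the simply connected summand lifts in infinitely many disjoint copies to squeeze its reduced homology to zero. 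Your approach treats all $n\geq2$ uniformly and avoids the $h$-cobordism theorem entirely, at the price of concluding only that the summand is a homotopy sphere rather than that the belt sphere bounds an honest disc; since the definition of prime only asks for a homotopy sphere, this is exactly what is needed. One small notational slip: the identification $H_{d-1}(\pi;\Z\pi)\cong H_{d-1}(\widetilde{X};\Z)$ is not literally correct (the former vanishes because $\Z\pi$ is projective, the latter because $\widetilde{X}$ is contractible); the clean chain is $H^1(\pi;\Z\pi)\cong H^1_c(\widetilde{X};\Z)\cong H_{d-1}(\widetilde{X};\Z)=0$ via compactly supported cohomology and Poincar\'e duality for the open manifold $\widetilde{X}$. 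This does not affect the validity of the argument.
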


\begin{proof}
 As $Q$ is aspherical by Whitehead's theorem
 the universal cover $\widetilde{Q}$
 of $Q$ contracts to its base point,
 see \cite[Theorem 4.5]{Hatcher}.
 Therefore, the cotangent bundle of $\widetilde{Q}$
 is trivial and $ST^*\widetilde{Q}$,
 which is diffeomorphic to $\widetilde{Q}\times S^{n-1}$,
 is homotopy equivalent to $S^{n-1}$.
 
 If $n=2$, then the universal cover of $ST^*Q$ is $\R^3$,
 see Remark \ref{rem:topofpi}.
 By Alexander's theorem $\R^3$ is {\bf irreducible},
 i.e.\ any embedded $2$-sphere bounds a ball,
 see \cite[Theorem 1.1]{Hatcher}.
 With \cite[Proposition 1.6]{Hatcher}
 the closed $3$-manifold
 $ST^*Q$ itself is irreducible and, therefore, prime.
 
 If $n\geq3$, then the universal cover of $ST^*Q$
 is diffeomorphic to $\widetilde{Q}\times S^{n-1}$.
 Consider an embedded $(2n-2)$-sphere $S_b$
 in $ST^*Q$ thinking of it as the belt sphere of a
 connected sum decomposition of $ST^*Q$.
 Let $\widetilde{S}_b$ be a lift of $S_b$
 to the universal cover of $ST^*Q$.
 Because the homology of the universal cover of $ST^*Q$
 vanishes in degree $2n-2$ any lift of $S_b$
 is the boundary of a bounded domain
 whose closure we denote by $\Omega_0$.
 We choose $\widetilde{S}_b$ so that $\Omega_0$
 does not contain any other of the lifts of $S_b$.
 The closure of the unbounded component of the complement
 of $\widetilde{S}_b$ is denoted by $\Omega_1$.
 Therefore, we obtain
 \[
 \widetilde{Q}\times S^{n-1}
 \cong
 \widetilde{ST^*Q}=
 \Omega_0\cup_{\widetilde{S}_b}\Omega_1
 \,.
 \]
 By Seifert--van Kampen's theorem $\Omega_0$
 must be simply connected.
 Moreover,
 the boundary operator of the Mayer--Vietoris sequence
 with respect to the above decomposition vanishes
 in all positive degrees.
 Indeed,
 we can take the image of $\{q\}\times S^{n-1}$,
 for $q\in\widetilde{Q}\simeq\{*\}$,
 as a generator of the homology in degree $n-1$
 so that its intersection with $\Omega_0$,
 and hence with $\widetilde{S}_b$, is empty.
 Therefore,
 the Mayer--Vietoris sequence
 reduces to the following short exact sequences
 \[
 0\ra
 H_k\widetilde{S}_b\ra
 H_k\Omega_0\oplus H_k\Omega_1\ra
 H_k\big(\widetilde{Q}\times S^{n-1}\big)\ra
 0
 \]
 for all positive $k$.
 This implies that $\Omega_0$
 has the homology of a ball.
 To see this for $k=n-1$ notice
 that the generator of the homology
 in degree $n-1$
 of the universal cover of $ST^*Q$
 is chosen to be contained in $\Omega_1$.
 The vanishing in degree $2n-2$
 follows with $\widetilde{S}_b\cong S^{2n-2}$
 being the boundary of $\Omega_0$.
 Therefore,
 $\Omega_0$ is a simply connected
 $(2n-1)$-dimensional homology ball
 with boundary $S^{2n-2}$.
 With \cite[p.~108, Proposition A and p.~110,
 Proposition C]{mil65} it follows that
 $\Omega_0$ is diffeomorphic
 to a $(2n-1)$-dimensional disc.
 With the arguments used in the proofs of
 \cite[Proposition 1.6 and Proposition 3.10]{Hatcher}
 this yields
 that $S_b$ bounds a $(2n-1)$-dimensional disc
 in $ST^*Q$ meaning that the assumed
 connected sum decomposition is trivial.
 After all, we see that $ST^*Q$ has to be prime.
\end{proof}

%%%%%%%%%%%%%%%%%%%%%%%%%%%%%%%%%%%
%%%%%%%%%%%%%%%%%%%%%%%%%%%%%%%%%%%

\section{Morse potentials\label{sec:morsepot}}

This section is devoted to a proof of Theorem \ref{thm:2ndthm}.

%%%%%%%%%%%%%%%%%%%%%%%%%%%%%%%%%%%

\subsection{Morsification\label{subsec:morsifi}}

We consider the Hamiltonian function
\[
H(u)=\frac12|u|^2_{h^{\flat}}+V\big(\tau(u)\big)
\]
of classical mechanics on $T^*Q$,
where $\tau\co T^*Q\ra Q$ is the cotangent bundle
and $(Q,h)$ is a closed oriented connected Riemannian manifold.
The linearization of $H$ at a point $u\in T^*Q$
can be written as
\[
T_uH=h^{\flat}\big(u,K_u(\,.\,)\big)+T_{\tau(u)}V\circ T_u\tau\,,
\]
where $K_u\co T_u(T^*Q)\ra T^*_{\tau(u)}Q$
is the connection operator of $h^{\flat}$.
In particular,
$u$ is a critical point of $H$
if and only if
$u$ is contained in the zero section $Q$ of $T^*Q$
and is a critical point of the potential $V\co Q\ra\R$.

This is of particular interest if $V$
is a Morse function what we will assume in the following.
Then $H$ will be a Morse function too.
This is because to the potential $V$
a positive definite quadratic form
with respect to the fibre direction is added.
In particular,
the Morse indices of a critical point are the same
for both functions $V$ and $H$.

%%%%%%%%%%%%%%%%%%%%%%%%%%%%%%%%%%%

\subsection{Topology of the energy surface\label{subsec:topofensur}}

We choose a Morse function $V$ on $Q$
that has a unique local maximum.
We assume that the maximum of $V$ is equal to $1$ and
that all critical points of index
less or equal than $n-1$ have critical value
smaller than $-1$.
For the regular value $0$ we consider the energy surface
$M=\{H=0\}$.

The sublevel set $W=\{H\leq0\}$
is a CW-complex of dimension
less or equal than $n-1$.
In particular,
$H_kW=0$ for all $k\geq n$
and $H_{n-1}W$ is torsion-free.
Hence, the boundary operator of the long exact sequence
of the pair $(W,M)$ induces an isomorphism
$H_{n+1}(W,M)\ra H_nM$.
Moreover, by the universal coefficient theorem
and Poincar\'e duality $H_{n-1}W$ injects into
$H_{n+1}(W,M)$ naturally.
In fact,
the Poincar\'e duality isomorphism
$H^{n-1}W\ra H_{n+1}(W,M)$
can be given in terms of the Morse functions
meaning that the classes in $H_{n+1}(W,M)$
can be represented by cocore discs $\{*\}\times D^{n+1}$,
see \cite[Remark on p.~35/36 and Theorem 7.5]{mil65}.
Therefore, the corresponding belt spheres $\{*\}\times S^n$
generate a free subgroup of $H_nM$
that is isomorphic to $H_{n-1}W$
as an application of the boundary operator shows.

The {\bf negative set} $N=\{V\leq0\}\subset Q$
is a deformation retract of $W$.
Hence, $H_{n-1}W$ and $H_{n-1}N$ are isomorphic.
By the assumptions on the Morse function $V$
we have $N\simeq Q\setminus\{*\}$
so that $H_{n-1}N=H_{n-1}Q$.
Therefore,
$H_{n-1}Q$ injects into $H_nM$
whose image is freely generated by belt spheres.
Denoting by $b_kQ$ the Betti numbers of $Q$
and using $b_1Q=b_{n-1}Q$
the Hurewicz homomorphism yields
\[
\pi_nM\geq\Z^{b_1Q}\,.
\]
This verifies the claim on the $n$-th homotopy group
in Theorem \ref{thm:2ndthm}.

\begin{exwith}
 If $Q$ is a closed Riemann surface of genus $g$,
 then $M$ is equal to the connected sum
 $S^3\#(2g)\big(S^1\times S^2\big)$.
\end{exwith}

%%%%%%%%%%%%%%%%%%%%%%%%%%%%%%%%%%%

\subsection{Virtually contact type\label{subsec:virtcontype}}

Let $\sigma$ be a $2$-form on $Q$
that vanishes on $\{V>-1\}$ and
consider the twisted symplectic form
$\omega_{\sigma}=\rmd\lambda+\tau^*\sigma$
on $T^*Q$.
Let $\theta$ be a bounded primitive of
$\mu^*\sigma$ denoting by
$\mu\co\widetilde{Q}\ra Q$
the universal covering.
By the proof of Lemma \ref{lem:boundsomcont}
we can assume that $\theta$ vanishes on
$\mu^{-1}\big(\{V>-1\}\big)$.

By multiplying $\sigma$ with a
small positive constant
we achieve that 
\[
\frac12|\theta|^2_{(\tilde{h})^{\flat}}<\frac12\,.
\]
This implies that $\widetilde{H}(\theta)$
is negative on $\mu^{-1}\big(\{V\leq-1\}\big)$.
Therefore,
as in the proof of Proposition \ref{prop:boundgivescontact},
\[
\big(\tilde{\lambda}+\tilde{\tau}^*\theta\big)|_{TM'}
\]
is a contact form on the intersection of $M'$ with
$\big(T^*\mu\big)^{-1}(T^*\{V\leq-1\})$
satisfying \eqref{eq:gb1} and \eqref{eq:gb2}.

Over the remaining part $U:=\{V>-1\}$
we perturb the Liouville form as follows:
Choose a function $F$ on $T^*Q$
whose support is contained in
$T^*U$ such that $(\lambda+\rmd F)(X_H)>0$
on $M\cap T^*U$,
see \cite[Lemma 5.2]{cfp10} or \cite[p. 137]{sz16}.
Therefore, $(\lambda+\rmd F)|_{TM}$
defines a contact form on $M\cap T^*U$.
Consequently,
\[
\alpha=\big(
\tilde{\lambda}+
\tilde{\tau}^*\theta+
\rmd\widetilde{F}\big)|_{TM'}
\]
is a contact form on $M'$,
where $\widetilde{F}=F\circ T^*\mu$.
Observe, that $\bar{U}$ is a compact set
and that the magnetic term $\sigma$
and the chosen primitive $\theta$
of the lift $\mu^*\sigma$
vanish over $\mu^{-1}(U)$.
Hence,
all involved differential forms are lifts
of differential forms that are
defined on a compact set.
In other words,
\eqref{eq:gb1} and \eqref{eq:gb2}
are satisfied along $M'\cap T^*\big(\mu^{-1}(U)\big)$
so that $\alpha$ defines a virtually contact structure.

\begin{rem}
 \label{mane}
 The Ma\~n\'e critical value of the described magnetic system
 equals $1$ as the maximum of $V$ is always a lower bound.
\end{rem}

%%%%%%%%%%%%%%%%%%%%%%%%%%%%%%%%%%%

\subsection{Exactness\label{subsec:exactness}}

The resulting odd-dimensional
symplectic form on $M$ is equal to
$\omega=(\rmd\lambda+\tau^*\sigma)|_{TM}$.
This form is exact
precisely if $\tau^*\sigma|_{TM}$ is exact,
which is the case
provided that $\sigma$ restricts to
an exact form on $\bar{N}$.
Invoking de Rham's theorem
and $N\simeq Q\setminus\{*\}$
we see that $\omega$ will be exact
in dimension $2n-1=3$.
If $n\geq3$ the exactness of $\tau^*\sigma|_{TM}$
is equivalent to the one of $\sigma$ on $Q$.
This follows with the Gysin sequence
for the unit cotangent bundle
of $\{V\leq-1\}$,
for which the map induced by $\tau$
is injective in degree $2$,
and an extension argument for primitive $1$-forms
over $U$, which is diffeomorphic to $D^n$.

In other words,
for $n=2$ the odd-dimensional symplectic form $\omega$
is always exact; for $n\geq3$ the odd-dimensional symplectic form
$\omega$ can be chosen to be non-exact precisely if
the Betti number $b_2Q$ does not vanish.

%%%%%%%%%%%%%%%%%%%%%%%%%%%%%%%%%%%

\subsection{Proof of Theorem \ref{thm:2ndthm}\label{subsec:pfofthm2}}

According to the construction
given in Sections \ref{subsec:topofensur},
\ref{subsec:virtcontype}, and \ref{subsec:exactness}
and Example \ref{ex:gromovsexample}
it suffices to find oriented closed manifolds $Q$
with non-trivial Betti numbers $b_1Q$ and $b_2Q$
that allow a Riemannian metric and
a closed non-exact $2$-form $\sigma$
such that the lift of $\sigma$ has a bounded primitive.

In dimension $n=2$ we can take any
closed oriented hyperbolic surface
and any $2$-form as magnetic term.
With Example \ref{ex:anosovthurston}
the case $n=3$ can be treated similarly.
In view of K\"unneth's formula
taking products in the sense of
Example \ref{ex:gromovsexample}
yields higher dimensional examples.
Because for any $b\in\N$
we find a manifold $Q$ with
the above listed properties
satisfying $b_1Q\geq b$
the claim of Theorem \ref{subsec:pfofthm2}
follows.
\hfill Q.E.D.

\begin{rem}
 For $b\geq 2$ the manifold $M$
 constructed in Section \ref{subsec:pfofthm2}
 is not diffeomorphic to a unit cotangent bundle
 of a closed aspherical manifold $Q$
 as such a $S^{n-1}$-bundle over $Q$
 has vanishing $\pi_2$ if $n=2$,
 $\pi_3$ equal to $\Z_2$ if $n=3$,
 and $\pi_n$ equal to $\Z$ if $n\geq 4$.
\end{rem}

%%%%%%%%%%%%%%%%%%%%%%%%%%%%%%%%%%%
%%%%%%%%%%%%%%%%%%%%%%%%%%%%%%%%%%%

\begin{ack}
  We would like to thank Peter Albers, Gabriele Benedetti,
  Youngjin Bae, Urs Frauenfelder, Stefan Friedl, Hansj\"org Geiges,
  Jarek K{\c{e}}dra, and Stefan Suhr.
\end{ack}

%%%%%%%%%%%%%%%%%%%%%%%%%%%%%%%%%%%
%%%%%%%%%%%%%%%%%%%%%%%%%%%%%%%%%%%


\begin{thebibliography}{10}

\bibitem{bae14}
  {\sc Y. Bae},
  An overtwisted disk in a virtual contact structure and the Weinstein conjecture,
  {\tt arXiv:1402.3893}
%  
\bibitem{bwz}
  {\sc Y. Bae, K. Wiegand, K. Zehmisch},
  Periodic orbits in virtually contact structures,
  {\tt arXiv:1705.02208}
%  
\bibitem{bgz}
  {\sc K. Barth, H. Geiges, K. Zehmisch},
  The diffeomorphism type of symplectic fillings,
  {\tt arXiv:1607.03310}
%
\bibitem{bz15}
  {\sc G. Benedetti, K. Zehmisch},
  On the existence of periodic orbits for magnetic systems on
              the two-sphere,
  \textit{J. Mod. Dyn.}\ \textbf{9} (2015),
  141--146.
%
\bibitem{bpv09}
  {\sc J. B. van den Berg, F. Pasquotto, R. C. A. M. Vandervorst},
  Closed characteristics on non-compact hypersurfaces in {$\Bbb R^{2n}$},
  \textit{Math. Ann.} {\bf 343} (2009),
  247--284.
%
\bibitem{bprv13}
  {\sc J. B. van den Berg, F. Pasquotto, T. Rot, R. C. A. M. Vandervorst},
  On periodic orbits in cotangent bundles of non-compact manifolds,
  \textit{J. Symplectic Geom.} \textbf{14} (2016),
  1145--1173.
%
\bibitem{cfp10}
  {\sc K. Cieliebak, U. Frauenfelder, G. P. Paternain},
  Symplectic topology of {M}a\~n\'e's critical values,
  \textit{Geom. Topol.}\ \textbf{14} (2010),
  1765--1870.
%  
\bibitem{con06}
  {\sc G. Contreras},
  The {P}alais-{S}male condition on contact type energy levels for convex
  Lagrangian systems,
  \textit{Calc. Var. Partial Differential Equations}\ \textbf{27} (2006),
  321--395.
%
\bibitem{gei08}
  {\sc H. Geiges},
  \textit{An introduction to contact topology},
  Cambridge Studies in Advanced Mathematics {\bf 109},
  Cambridge University Press, Cambridge (2008),
  xvi+440.
%
\bibitem{grz14}
  {\sc H. Geiges, N. R{\"o}ttgen, K. Zehmisch},
  Trapped {R}eeb orbits do not imply periodic ones,
  \textit{Invent. Math.}\ {\bf 198} (2014),
  211--217.
%
\bibitem{grz16}
  {\sc H. Geiges, N. R{\"o}ttgen, K. Zehmisch},
   From a {R}eeb orbit trap to a {H}amiltonian plug,
  \textit{Arch. Math. (Basel)}\ {\bf 107} (2016),
   397--404.
%
\bibitem{gz16a}
  {\sc H. Geiges, K. Zehmisch},
  The {W}einstein conjecture for connected sums,
  \textit{Int. Math. Res. Not. IMRN}\ {\bf 2016},
  325--342.
%
\bibitem{gz16b}
  {\sc H. Geiges, K. Zehmisch},
  Reeb dynamics detects odd balls,
  \textit{Ann. Sc. Norm. Super. Pisa Cl. Sci. (5)}\ {\bf 15} (2016),
  663--681.
%
\bibitem{gnw16}
  {\sc P. Ghiggini, K. Niederkr{\"u}ger, C. Wendl},
   Subcritical contact surgeries and the topology of symplectic
              fillings,
  \textit{J. \'Ec. polytech. Math.}\ {\bf 3} (2016),
  163--208.
%
\bibitem{gin96}
  {\sc V. L. Ginzburg},
  On closed trajectories of a charge in a magnetic field.
  An application of symplectic geometry,
  in: 
  \textit{Contact and symplectic geometry},
  Cambridge, 1994,
  {131--148}, Cambridge Univ. Press, 1996.
%
\bibitem{gr91}
  {\sc M. Gromov},
  K\"ahler hyperbolicity and {$L_2$}-{H}odge theory,
  \textit{J. Differential Geom.}\ {\bf 33} (1991),
  263--292.
%
\bibitem{Hatcher}
\textsc{A. Hatcher},
\textit{Notes on Basic $3$-Manifold Topology},
course notes,\\
{\tt https://www.math.cornell.edu/$\sim$hatcher/3M/3Mfds.pdf}
%
\bibitem{hof93}
  {\sc H. Hofer},
  Pseudoholomorphic curves in symplectizations with applications
  to the {W}einstein conjecture in dimension three,
  \textit{Invent. Math.} {\bf 114} (1993),
  515--563.
%
\bibitem{hoze94}
  {\sc H. Hofer, E. Zehnder},
  {\it Symplectic invariants and {H}amiltonian dynamics},
  Birkh\"auser Verlag, Basel, (1994),
  xiv+341.
%
\bibitem{ked09}
  {\sc J. K{\c{e}}dra},
  Symplectically hyperbolic manifolds,
  \textit{Differential Geom. Appl.} {\bf 27} (2009),
  455--463.
 %
\bibitem{mil65}
  {\sc J. Milnor},
  \textit{Lectures on the h-cobordism theorem},
  Princeton University Press, Princeton, N.J., (1965),
  v+116.
%
\bibitem{dpp15}
  {\sc  \'A. del Pino, F. Presas},
  The foliated Weinstein conjecture,
  {\tt  arXiv:1509.05268}
%
\bibitem{sz16}
  {\sc S. Suhr, K. Zehmisch},
  Linking and closed orbits,
  \textit{Abh. Math. Semin. Univ. Hambg.}\ \textbf{86} (2016),
  133--150.
%
\bibitem{thu88}
{\sc W. P. Thurston},
On the geometry and dynamics of diffeomorphisms of surfaces,
\textit{Bull. Amer. Math. Soc. (N.S.)} {\bf 19} (1988),
417--431.
%
\bibitem{wein79}
{\sc A. Weinstein},
On the hypotheses of Rabinowitz' periodic orbit theorems,
\emph{J. Differential Equations}
{\bf 33} (1979), 336--352.

\end{thebibliography}
\end{document}